\documentclass[12pt,a4paper]{amsart}
\usepackage{color,graphicx,fullpage,amssymb,tikz}
\usepackage{url}
\usepackage[colorlinks=true]{hyperref}
\newtheorem{theorem}{Theorem}[section]
\newtheorem{lemma}[theorem]{Lemma}
\newtheorem{proposition}[theorem]{Proposition}

\newtheorem{maintheorem}{Theorem}

\theoremstyle{definition}
\newtheorem{definition}[theorem]{Definition}
\newtheorem{remark}[theorem]{Remark}
\newtheorem{example}[theorem]{Example}

\newcommand{\Q}{\mathbb{Q}}
\newcommand{\Qp}{\mathbb{Q}_p}
\newcommand{\Cp}{\mathbb{C}_p}
\newcommand{\Zp}{\mathbb{Z}_p}
\newcommand{\N}{\mathbb{N}}
\newcommand{\R}{\mathbb{R}}
\newcommand{\Z}{\mathbb{Z}}
\newcommand{\C}{\mathbb{C}}
\newcommand{\dd}{\mathrm{d}}
\newcommand{\ii}{\mathrm{i}}
\newcommand{\M}{\mathcal{M}}

\newcommand{\sphere}{\mathrm{S}^2_p}

\newcommand{\tr}{^{\mathrm{T}}}
\newcommand{\ocal}{\mathcal{O}}
\newcommand{\letnpos}{Let $n$ be a positive integer}
\newcommand{\letpprime}{Let $p$ be a prime number}
\newcommand{\ballcp}{\mathrm{B}_p^{\C}}
\renewcommand{\le}{\leqslant}
\renewcommand{\ge}{\geqslant}
\AtBeginEnvironment{pmatrix}{\let\frac\dfrac}

\DeclareMathOperator{\DSq}{DSq}
\DeclareMathOperator{\Ind}{Ind}
\numberwithin{equation}{section}

\newenvironment{enumerate-roman}{\begin{enumerate}
		
	}{\end{enumerate}}

\title{$p$-adic integrable systems: from biquadratic equations to local models}

\author[Luis Crespo, \'Alvaro Pelayo]{Luis Crespo\,\,\,\,\,\, \'Alvaro Pelayo}

\address{Luis Crespo,
	Departamento de Matem\'{a}ticas, Estad\'{i}stica y Computaci\'{o}n, Universidad de Cantabria, Av.~de Los Castros 48, 39005 Santander, Spain}
\email{luis.cresporuiz@unican.es}

\address{\'Alvaro Pelayo,
	Facultad de Ciencias Matem\'aticas,
	Universidad Complutense de Madrid, 28040 Madrid, Spain, and Real Academia de Ciencias Exactas, F\'isicas y Naturales, Madrid, Spain}
\email{alvpel01@ucm.es}

\begin{document}
	
\maketitle

\begin{center}
	\emph{Dedicated with admiration to Juan Luis Vázquez, pioneer in analysis and differential equations, on his eightieth birthday.}
\end{center}

\begin{abstract}
	\letpprime\ and $n$ a positive integer. The study of normal forms of $p$-adic analytic integrable systems $F=(f_1,\ldots,f_n):(M,\omega)\to(\Qp)^n$ is essential to understand their geometrical and dynamical properties. Even though in some cases, such as dimension $4$, there is a classification of the local normal forms, it can be a challenge to determine them explicitly. Our goal in this paper is to introduce techniques to compute information about these local normal forms. We then explain how this is useful for instance to study the $p$-coupled angular momentum. The techniques we introduce cover all cases in dimension $4$ and require solving biquadratic equations. Along the way we define two new notions: almost eigenvectors and aligned symplectic coordinates. They are useful to prove our results but also of independent interest. The proofs use our previous classification of normal forms and rely on a combination of analytic estimates and Galois theory of $p$-adic extension fields. However, the statements of the main results are essentially self-contained and do not require prior knowledge of $p$-adic integrable systems or $p$-adic symplectic geometry.
\end{abstract}

\section{Introduction}

Integrable systems form a fundamental class of dynamical systems with a maximal number of independent conserved quantities \cite{PelVuN-integrable}. The most intriguing aspects of integrable systems are encoded in their critical points. Classifying the local behavior of the system at the critical points (see Figure \ref{fig:normal-forms}) allows us to understand the evolution of the system near them; in the real case this behavior is described by the Weierstrass-Williamson classification \cite{Weierstrass,Williamson}.

\begin{figure}[h]
	\begin{tikzpicture}[scale=1.2]
		\fill[green] (-3,0)--(8,0)--(9,1)--(-2,1);
		\node (toro) at (5,4) {\includegraphics[height=2cm]{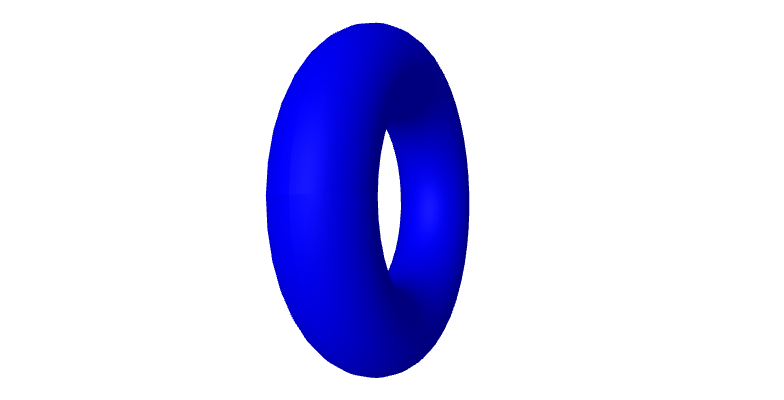}};
		\node (imtoro) at (5,0.5) {$\times$};
		\draw[->] (toro) -- (5,2) node[right] {$(\xi,\eta)$} -- (imtoro.north);
		\draw[dotted] (4.6,3.7)--(4.6,4.3)--(4.8,4.3)--(4.8,3.7)--(4.6,3.7);
		\node (toroap) at (2.5,4) {\includegraphics[height=2cm,trim=20cm 0 20cm 0,clip]{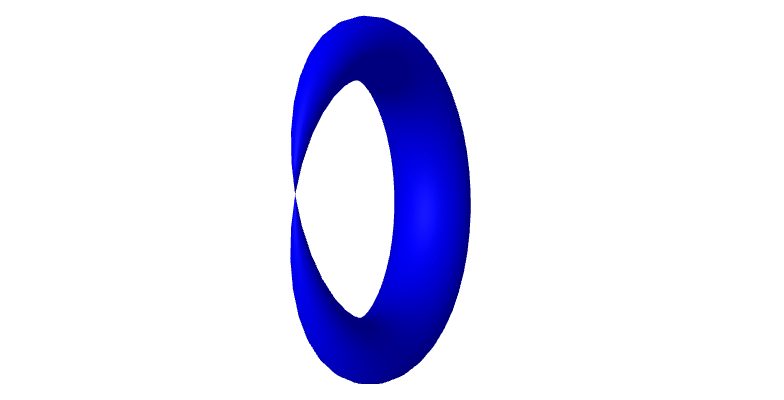}};
		\node (imtoroap) at (2.5,0.5) {$\times$};
		\draw[->] (toroap) -- (2.5,2) node[left] {$(x\eta-y\xi,x\xi+y\eta)$}--(imtoroap.north);
		\draw[dotted] (2.1,3.7)--(2.1,4.3)--(2.3,4.3)--(2.3,3.7)--(2.1,3.7);
		\draw[blue] (7,4) ellipse (0.4cm and 0.8cm);
		\node (imS1) at (7,0.5) {$\times$};
		\draw[->] (7,3) -- (7,2) node[right] {$(\frac{x^2+\xi^2}{2},\eta)$}--(imS1.north);
		\draw[dotted] (6.5,3.7)--(6.5,4.3)--(6.7,4.3)--(6.7,3.7)--(6.5,3.7);
		\fill[blue] (-1,4) circle (0.05);
		\node (impunto) at (-1,0.5) {$\times$};
		\draw[->] (-1,3.7) -- (-1,2) node[left] {$(\frac{x^2+\xi^2}{2},\frac{y^2+\eta^2}{2})$}--(impunto.north);
		\draw[dotted] (-1.1,3.7)--(-1.1,4.3)--(-0.9,4.3)--(-0.9,3.7)--(-1.1,3.7);
		\draw[->] (3.5,4) -- (3.5,2) node[right] {$F$} -- (3.5,0.5);
		\node (R2) at (9.5,0.5) {$\R^2$};
		\node (R4) at (9.5,4) {$\R^4$};
	\end{tikzpicture}
	\caption{Normal forms of regular and critical points of elliptic-elliptic, focus-focus and elliptic-regular type of an integrable system $F:\R^4\to\R^2$. Some of these can be normal forms of Theorem \ref{thm:integrable} and Proposition \ref{prop:integrable}.}
	\label{fig:normal-forms}
\end{figure}

For example, rank $0$ critical points of a $4$-dimensional integrable system $G:(N,\sigma)\to\R^2$ can only be of four types: elliptic-elliptic, elliptic-hyperbolic, hyperbolic-hyperbolic and focus-focus. The local normal forms of $G$ at these points are respectively
\begin{equation}\label{eq:normal-forms0}
	\left(\frac{x^2+\xi^2}{2},\frac{y^2+\eta^2}{2}\right),\left(\frac{x^2+\xi^2}{2},y\eta\right),(x\xi,y\eta)\,,(x\eta-y\xi,x\xi+y\eta).
\end{equation}

The local normal forms of $G$ at rank $1$ critical points can be elliptic-regular and hyperbolic-regular, which are respectively given by
\begin{equation}\label{eq:normal-forms1}
	\left(\frac{x^2+\xi^2}{2},\eta\right),(x\xi,\eta).
\end{equation}

Now let us move to the case where the manifold, the integrable system and the symplectic form have coefficients in the non-Archimedean field of $p$-adic numbers $\Qp$, where $p$ is a prime number. Let $n$ be a positive integer. Like in the real case, all $2n$-dimensional $p$-adic analytic symplectic manifolds are locally equivalent \cite[Theorem B]{CrePel-Darboux} to the standard $p$-adic space \[((\Qp)^{2n},\sum_{i=1}^n \dd x_i\wedge\dd\xi_i),\] where $(x_1,\xi_1,\ldots,x_n,\xi_n)$ are the standard coordinates on $(\Qp)^{2n}$.

In our previous work \cite{CrePel-williamson,CrePel-williamson2} we gave an explicit classification of the possible local models of a $p$-adic symplectic integrable system $F:(M,\omega)\to(\Qp)^2$ in standard local coordinates, where $\omega=\dd x\wedge\dd\xi+\dd y\wedge\dd\eta$, near any singularity. In great contrast to \eqref{eq:normal-forms0} and \eqref{eq:normal-forms1}, this classification \textit{includes hundreds of inequivalent local normal forms} \cite[Theorem A]{CrePel-williamson} in the case of rank $0$ points and up to $11$ forms in the case of rank $1$ critical points. For example, the local normal forms for a rank $1$ critical point given by
\[(x^2+p\xi^2,\eta)\,,(x^2+p^2\xi^2,\eta)\]
do not appear in the real case. We also gave partial results on the number of local normal forms for systems $F=(f_1,\ldots,f_n)$ in arbitrary dimension using the Hardy-Ramanujan formula from number theory \cite{HarRam}.

Despite having a complete knowledge of the singularities of $F:(M,\omega)\to(\Qp)^2$ as well as partial knowledge in arbitrary dimension $2n$, it is a challenge to carry out the computations of these local normal forms in examples arising in physics, or even mathematical examples. Our goal in this paper is to introduce techniques to compute the local normal forms, in the extreme but fundamental case of corank $1$ critical points and some cases of greater corank, including all cases in dimension $4$. Theorem \ref{thm:integrable1} says that the local normal form of a $p$-adic integrable system with $n$ degrees of freedom (such as the Jaynes-Cummings model or the coupled angular momentum for $n=2$) at a corank $1$ critical point has a component of the following form in some local symplectic coordinates $(x_1,\xi_1,\ldots,x_n,\xi_n)$:
\[x_1^2+c\xi_1^2,\]
with $c\in\Qp$ being the unique value for which a certain $p$-adic biquadratic polynomial $p_c$ has a zero, where the formula for $p_c$ is concrete, and one can construct it explicitly from the integrals of the system. Theorem \ref{thm:integrable} gives a similar statement for arbitrary critical points. The precise formulations are technical and we state them in Section \ref{sec:main}.

Many of the arguments in the proofs of the present paper rely on our classification results (\cite[Theorems A--H]{CrePel-williamson}). However, the statements are self-contained and can be understood without any previous knowledge of $p$-adic integrable systems or $p$-adic symplectic geometry. The present paper is more applicable in situations where finding the local normal forms given in \cite{CrePel-williamson} is possible according to the theory but one faces highly complex computations which may not be feasible in practice (as it is sometimes the case in actual examples from physics \cite{CrePel-JC,CrePel-angular}). The main results of the paper are stated later as Theorems \ref{thm:integrable1}, \ref{thm:integrable}. Their proofs use our previous classification of local normal forms \cite{CrePel-williamson,CrePel-williamson2} and Galois theory of $p$-adic extension fields. In addition, Theorem \ref{thm:integrable} uses the new concept of \emph{$p$-adic almost eigenvector} and relies on analytic estimates.

Throughout this paper we work with $p$-adic analytic manifolds in the sense of Serre \cite{Serre} and Schneider \cite{Schneider}, instead of other types of $p$-adic spaces which are important in algebraic geometry. Our view point seems optimal to treat problems arising in classical and quantum mechanics (see for instance our works on the $p$-adic coupled angular momentum \cite{CrePel-angular} and the $p$-adic Jaynes-Cummings model \cite{CrePel-JC}) as well as in symplectic topology (see \cite{CrePel-nonsqueezing}).

\bigskip
\noindent \textit{Structure of the paper.} Section \ref{sec:aligned} defines the concept of aligned symplectic coordinates, which we need in our main results. Section \ref{sec:main} states the main results Theorems \ref{thm:integrable1} and \ref{thm:integrable}. Section \ref{sec:approx} proves some results about approximation of eigenvectors of $p$-adic matrices. It contains a result (Theorem \ref{thm:eigen}) about approximation of $p$-adic eigenvectors which we use in order to prove Theorem \ref{thm:integrable} and which is of independent interest. Section \ref{sec:integrable} applies these results to the classification of critical points of integrable systems and proves Theorems \ref{thm:integrable1} and \ref{thm:integrable}. Section \ref{sec:almost} gives an example of the intuition behind the concept of almost eigenvector. Section \ref{sec:examples} gives some examples of the results in this paper. Section \ref{sec:angmom} applies them to the coupled angular momentum system. Section \ref{sec:real} introduces the real analog of Theorem \ref{thm:eigen}. Section \ref{sec:remarks} contains some final remarks. Finally, Appendix \ref{sec:padic} recalls the definitions of the $p$-adic numbers and the $p$-adic balls, and Appendix \ref{sec:degeneracy} compares the notions of non-degeneracy for critical points of functions.

\bigskip
\noindent \textit{Acknowledgements.} The first author is funded by grant PID2022-137283NB-C21 of MCIN/AEI/ 10.13039/501100011033 / FEDER, UE. The second author was partly funded by a FBBVA (Bank Bilbao Vizcaya Argentaria Foundation) Grant for Scientific Research Projects with title \textit{From Integrability to Randomness in Symplectic and Quantum Geometry} (2022-2025) and he thanks the Dean of the School of Mathematical Sciences Antonio Br\'u and the Chair of the Department of Algebra, Geometry and Topology at the Complutense University of Madrid, Rutwig Campoamor, during the tenure of the FBBVA project (2022-2025), for their support and excellent resources he was provided with to carry out the project. He also thanks the University of Cantabria and Francisco Santos for the hospitality during July of 2025 when part of this paper was written.

\section{Aligned symplectic coordinates for $p$-adic integrable systems}\label{sec:aligned}

We recall in this section the concept of $p$-adic analytic integrable system and introduce the notion of aligned symplectic coordinates, which we need to state our results Theorems \ref{thm:integrable1}, \ref{thm:integrable}.

\letnpos\ and let $p$ be a prime number. Recall \cite[Definition 3.3]{CrePel-JC} that a \textit{$p$-adic analytic integrable system}
$F=(f_1,\ldots,f_n):(M,\omega)\to(\Qp)^n$
on a $2n$-dimensional $p$-adic analytic manifold $(M,\omega)$ is a smooth map such that
\begin{enumerate}
	\item the functions $f_1,\ldots,f_n$ satisfy $\{f_i,f_j\}=0$ for all $1\le i\le j\le n$;
	\item the set where the $n$ differential $1$-forms $\dd f_1,\ldots,\dd f_n$ are linearly independent is dense in $M$.
\end{enumerate}
This notion was introduced in \cite[Definition 7.1]{PVW} and the above is a slightly modified version (the original definition was slightly more restrictive). As usual, a \textit{critical point of $F:(M,\omega)\to(\Qp)^n$} is a point at which the $p$-adic analytic $1$-forms $\dd f_1,\ldots,\dd f_n$ are linearly dependent. In this paper we are interested only in \textit{non-degenerate} critical points in the Morse-Bott sense (see Definition \ref{def:nondeg-integrable} for the precise notion). The \textit{rank of a critical point $m$} is the number of linearly independent $1$-forms among $\dd f_1,\ldots,\dd f_n$ at $m$, and the \textit{corank of $m$} is $n$ minus the rank of $m$. For the definition of \emph{non-degenerate critical point of a $p$-adic function} in a symplectic sense see Definition \ref{def:nondeg}.

\begin{example}
	The $p$-adic Jaynes-Cummings model $F=(J,H):\sphere\times\sphere\to(\Qp)^2$ \cite[Theorems 2.1, 2.3]{CrePel-JC} is obtained by coupling a $p$-adic spin system and a $p$-adic oscillator:
	\[\left\{
	\begin{aligned}
		J(x,y,z,u,v) & =\frac{u^2+v^2}{2}+z; \\
		H(x,y,z,u,v) & =ux+vy.
	\end{aligned}
	\right.\]
	It has two rank $0$ critical points at $(0,0,1,0,0)$ and $(0,0,-1,0,0)$ and a curve of rank $1$ critical points, like its real equivalent.
\end{example}

\begin{figure}
	\begin{tikzpicture}[scale=1.5]
		\node (esfera) at (0,0) {\includegraphics[trim=15cm 3cm 15cm 3cm,height=7.5cm]{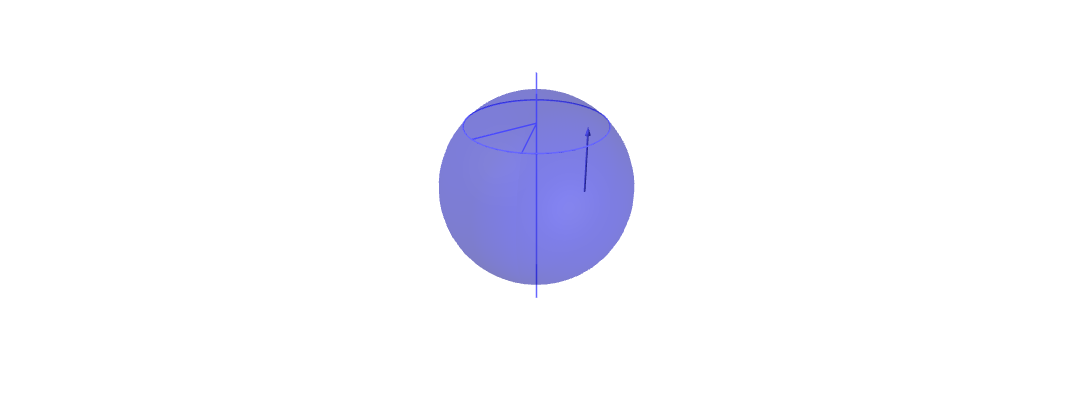}};
		\node (theta) at (-0.5,1.3) {$\theta$};
		\node (h) at (1.3,1) {$h$};
		\node (x) at (3,0) {$\times$};
		\draw[->] (4,0) -- (8,0);
		\draw[->] (6,-2) -- (6,2);
		\draw (6,0) circle (1);
		\node (u) at (8,0.2) {$u$};
		\node (v) at (6.2,2) {$v$};
	\end{tikzpicture}
	\caption{The real Jaynes-Cummings model on $\mathrm{S}^2\times\R^2$.}
	\label{fig:real-JC}
\end{figure}

We define here a particular kind of local symplectic coordinates which we need in order to state our results.

\begin{definition}[Aligned symplectic coordinates]
	\letnpos. \letpprime. Let $(M,\omega)$ be a $p$-adic analytic symplectic manifold of dimension $2n$. Let $F=(f_1,\ldots,f_n):(M,\omega)\to(\Qp)^n$ be a $p$-adic analytic integrable system. Let $m$ be a critical point of $F$ and let $r$ be the rank of $m$. We say that $F$ is in \emph{separated form at $m$} if $\dd f_1(m)=\ldots=\dd f_{n-r}(m)=0$.
	
	Suppose that $F$ is in separated form at $m$. Let $U$ be an open set which contains $m$. We say that $(x_1,\xi_1,\ldots,x_n,\xi_n)$ are \emph{aligned local symplectic coordinates} for $F$ on $U$ centered at $m$ if the following conditions hold:
	\begin{enumerate}
		\item $\dd\xi_i(m)=\dd f_i(m)$ for all $i>n-r$;
		\item $\omega=\sum_{i=1}^n\dd x_i\wedge\dd\xi_i$ on $U$.
	\end{enumerate}
\end{definition}

\begin{example}
	The rank $1$ critical points of the Jaynes-Cummings model are of the form \[(au,av,-a^2,u,v)\] with $a,u,v\in\Qp$ and $a^2(u^2+v^2)+a^4=1$. We have at those points that
	\[\dd J=u\dd u+v\dd v+\dd z,\quad \dd H=au\dd u+av\dd v+a\dd z.\]
	Thus $(H-aJ,J)$ is a separated form of $(J,H)$. In order to find aligned symplectic coordinates $(x_1,\xi_1,x_2,\xi_2)$, we need that $\dd\xi_2=\dd J$. A way to find this is, as we prove in \cite[Proposition 7.3]{CrePel-JC},
	\[\begin{pmatrix}
		x \\ y \\ u \\ v
	\end{pmatrix}=
	A
	\begin{pmatrix}
		x_1 \\ \xi_1 \\ x_2 \\ \xi_2
	\end{pmatrix},\]
	where $A$ is
	\[
	\begin{pmatrix}
		v & au & av & u \\
		-u & av & -au & v \\
		-av & -u & v & au \\
		au & -v & -u & av
	\end{pmatrix}
	\begin{pmatrix}
		1 & 0 & 0 & 0 \\
		0 & \frac{a^3}{(1-a^4)(a^2+1)} & 0 & \frac{2a^6-a^4-1}{a(3a^4+1)} \\
		\frac{a(a^2-1)^2}{3a^4+1} & 0 & \frac{a^3}{(1-a^4)(a^2+1)} & 0 \\
		0 & 0 & 0 & 1
	\end{pmatrix}.
	\]
\end{example}

\begin{lemma}[Existence of aligned symplectic coordinates]\label{lemma:aligned}
	\letnpos. \letpprime. Let $(M,\omega)$ be a $p$-adic analytic symplectic manifold of dimension $2n$. Let $F=(f_1,\ldots,f_n):(M,\omega)\to(\Qp)^n$ be a $p$-adic analytic integrable system. Let $m$ be a critical point of $F$. Then there exist an open set $U$ which contains $m$, a $p$-adic invertible matrix $B\in\M_n(\Qp)$ such that $F'=B\circ F$ is in separated form at $m$, and aligned local symplectic coordinates for $F'$ on $U$ centered at $m$. We call $F'$ \emph{a separated form of $F$ at $m$.}
\end{lemma}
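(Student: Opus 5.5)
Two things have to be produced: a linear change $B$ that moves the nonzero differentials to the last $r$ slots, and symplectic coordinates whose last $r$ momenta have prescribed differentials at $m$. The plan is to get $B$ from linear algebra, reduce to the standard model via the $p$-adic Darboux theorem \cite[Theorem B]{CrePel-Darboux}, and finish with a linear symplectic change of coordinates.

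\emph{Step 1: the matrix $B$.} Let $r$ be the rank of $m$ and let $V\subset T_m^*M$ be the span of $\dd f_1(m),\ldots,\dd f_n(m)$, so $\dim V=r$. Consider the linear map $\Qp^n\to T_m^*M$, $(c_1,\ldots,c_n)\mapsto\sum_j c_j\,\dd f_j(m)$. Its kernel $K$ has dimension $n-r$. Choose a basis $b_1,\ldots,b_{n-r}$ of $K$ and extend it by vectors $b_{n-r+1},\ldots,b_n$ to a basis of $\Qp^n$. Let $B$ be the matrix with these vectors as rows. Then $B$ is invertible and the components $f'_i=\sum_j B_{ij}f_j$ of $F'=B\circ F$ satisfy:
\begin{itemize}
\item $\dd f'_i(m)=0$ for $i\le n-r$;
\item $\dd f'_{n-r+1}(m),\ldots,\dd f'_n(m)$ are linearly independent, because they span $V$.
\end{itemize}
Since the entries of $B$ are constants, $F'$ still Poisson-commutes and has independent differentials on a dense set. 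So $F'$ is again an integrable system, and it is in separated form at $m$.

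\emph{Step 2: reduction to a symplectic vector space.} By \cite[Theorem B]{CrePel-Darboux} there are Darboux coordinates $(y_1,\eta_1,\ldots,y_n,\eta_n)$ on a neighbourhood $U$ of $m$, centered at $m$, with $\omega=\sum_i\dd y_i\wedge\dd\eta_i$. Any linear symplectic change of these coordinates again gives Darboux coordinates on $U$. It therefore suffices to find a linear symplectic basis of $(T_mM,\omega_m)$, equivalently of $T_m^*M$ with the dual symplectic form, in which the covectors $\dd\xi_i(m)$, $i>n-r$, equal $\alpha_i:=\dd f'_i(m)$.

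\emph{Step 3 (main point): the $\alpha_i$ span an isotropic subspace.} This needs $\omega_m^{-1}(\alpha_i,\alpha_j)=\{f'_i,f'_j\}(m)=0$, which holds by the integrability condition. Hence $\alpha_{n-r+1},\ldots,\alpha_n$ are independent and span an isotropic subspace. The linear symplectic basis theorem holds over any field, in particular over $\Qp$: an independent isotropic family can be completed to a symplectic basis $(e_1,f_1,\ldots,e_n,f_n)$ with $f_i=\alpha_i$ for $i>n-r$. One does this inductively:
\begin{itemize}
\item pick a partner $e_n$ with $\omega(e_n,\alpha_n)=1$ and $e_n$ orthogonal to the other $\alpha_i$, which is possible by nondegeneracy and independence;
\item pass to the symplectic orthogonal of $\operatorname{span}(e_n,\alpha_n)$ and repeat;
\item finish with an arbitrary symplectic basis of what remains.
\end{itemize}

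\emph{Step 4: the coordinates.} Let $P$ be the constant matrix sending the Darboux covectors $(\dd y_i(m),\dd\eta_i(m))$ to this basis, and write $P$ as the corresponding linear combinations of the functions $y_i,\eta_i$. Define $x_i,\xi_i$ by these combinations. They are centered at $m$ and satisfy $\omega=\sum\dd x_i\wedge\dd\xi_i$ on all of $U$, because $P$ is symplectic with constant coefficients. Finally $\dd\xi_i(m)=\dd f'_i(m)$ for $i>n-r$ by construction.

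The only nonlinear ingredient is the Darboux theorem. The step that needs care is the isotropy in Step 3, and it follows directly from $\{f_i,f_j\}=0$.
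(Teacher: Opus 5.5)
Your proof is correct and follows the same route as the paper: you take a basis of the kernel of $c\mapsto\sum_j c_j\,\dd f_j(m)$ and extend it to a basis to get $B$, then modify Darboux coordinates by a constant linear symplectic change obtained by completing the isotropic family $\dd f'_{n-r+1}(m),\ldots,\dd f'_n(m)$ (isotropic because $\{f'_i,f'_j\}=0$) to a symplectic basis. The only differences are presentational: you work with coefficient vectors in $(\Qp)^n$ rather than with the span of the functions $f_i$, and you spell out the inductive completion step, which the paper merely asserts.
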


\begin{proof}
	We start by showing that $F'$ exists. Let $r$ be the rank of $m$. Let $V$ be the vector subspace of $\Omega_0(M)$ generated by $(f_1,\ldots,f_n)$ and consider the linear map $\phi:V\to(\mathrm{T}_mM)^*$ given by $\phi(f)=\dd f(m)$. By definition of rank, the image of $\phi$ has dimension $r$, hence its kernel has dimension $n-r$. Let $(f_1',\ldots,f_{n-r}')$ be a basis of the kernel of $\phi$ and complete it to a basis $(f_1',\ldots,f_n')$ of $V$. This basis is the separated form which we want.
	
	Now we prove that there are aligned local symplectic coordinates for $F'$. By \cite[Theorem B]{CrePel-Darboux}, there exists an open set $U$ which contains $m$ and local symplectic coordinates on $U$. Now we make $r$ linear combinations of these coordinates, $\xi_{n-r+1},\ldots,\xi_n$, such that $\dd f_i'(m)=\dd\xi_i(m)$ for all $i>n-r$. Since the functions form an integrable system, \[\{\xi_i,\xi_j\}(m)=\{f_i',f_j'\}(m)=0\] for all $i,j>n-r$. This implies that $\xi_{n-r+1},\ldots,\xi_n$ are partial linear symplectic coordinates on the tangent space at $m$, and as such they can be completed to linear symplectic coordinates $(x_1,\xi_1,\ldots,x_n,\xi_n)$ on the tangent space. Since the original coordinates were already linear symplectic coordinates on this space, the linear change from the old to the new coordinates is symplectic. This in turn implies that the new coordinates are symplectic on all of $U$ like the old ones, hence these new coordinates are aligned local symplectic coordinates, and we are done.
\end{proof}

\section{Main results}\label{sec:main}

We state here our main results, Theorems \ref{thm:integrable1} and \ref{thm:integrable}, which give us concrete techniques to compute the local normal form of an integrable system for corank $1$ critical points and general critical points provided that we know a vector which is close enough to an eigenvector of a certain matrix. While the proof of Theorem \ref{thm:integrable1} is more elementary, the proof of Theorem \ref{thm:integrable} requires combining analytic estimates with ideas about $p$-adic Galois extensions. Both results should be useful when dealing with concrete systems arising in physics.

\subsection{Normal form computation for corank $1$ critical points of integrable systems}

We now use Lemma \ref{lemma:aligned} to formulate the first main theorem.

\begin{maintheorem}[Normal form computation for corank $1$ critical points]\label{thm:integrable1}
	\letnpos. \letpprime. Let $(M,\omega)$ be a $p$-adic analytic symplectic manifold of dimension $2n$. Let $F=(f_1,\ldots,f_n):(M,\omega)\to(\Qp)^n$ be a $p$-adic analytic integrable system. Let $m$ be a corank $1$ non-degenerate critical point of $F$ and let $F'=(f_1',\ldots,f_n')$ be a separated form of $F$ at $m$ given by Lemma \ref{lemma:aligned}. Suppose that $m$ is a non-degenerate critical point of $f_1'$ in the symplectic sense. The following hold.
	\begin{enumerate-roman}
		\item If $p\equiv 1\mod 4$, let $c_0$ be the smallest quadratic non-residue mod $p$. Let
		\begin{equation}\label{eq:X}
			X_p=\begin{cases}
				\{1,c_0,p,c_0p,(c_0)^2p,(c_0)^3p,c_0p^2\} & \text{if }p\equiv 1\mod 4; \\
				\{1,-1,p,-p,p^2\} & \text{if }p\equiv 3\mod 4; \\
				\{1,-1,2,-2,3,-3,6,-6,12,-18,24\} & \text{if }p=2.
			\end{cases}
		\end{equation}
		Then there exists a unique $c\in X_p$, an open set $U$ containing $m$, and aligned local symplectic coordinates $(x_1,\xi_1,\ldots,x_n,\xi_n)$ for $F'$ on $U$ centered at $m$ such that the normal form of $F$ (which coincides with the normal form of $F'$) at $m$ is $(x_1^2+c\xi_1^2,\xi_2,\ldots,\xi_n)$.
		\item Let $(x_1',\xi_1',\ldots,x_n',\xi_n')$ be any aligned local symplectic coordinates for $F'$ on $U$ centered at $m$ (not necessarily those from part \textup{(i)}). Let
		\[\alpha=\frac{\partial^2 f_1'}{\partial x_1'^2}(m),\qquad \beta=\frac{\partial^2 f_1'}{\partial x_1'\partial \xi_1'}(m),\qquad \gamma=\frac{\partial^2 f_1'}{\partial \xi_1'^2}(m).\]
		The parameter $c\in X_p$ in part \textup{(i)} is the element $c\in X_p$ for which the $p$-adic biquadratic polynomial $p_{\alpha,\beta,\gamma,c}\in\Qp[r,s]$:
		\[p_{\alpha,\beta,\gamma,c}(r,s)=(\alpha^2c)r^4+(2\alpha^2c^2)r^2s^2+(\alpha^2c^3)s^4+(\beta^2-\alpha\gamma)\]
		has a zero in $(\Qp)^2$.
	\end{enumerate-roman}
\end{maintheorem}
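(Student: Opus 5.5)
The plan is to reduce everything to a classification problem for binary quadratic forms. The argument has two parts: an existence and uniqueness statement taken from the classification of \cite{CrePel-williamson}, and an invariant that detects $c$.

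\textbf{Part (i).} I would invoke the corank $1$ case of the classification of non-degenerate critical points in \cite[Theorems A--H]{CrePel-williamson}. It states that at a corank $1$ non-degenerate critical point the system is locally equivalent to $(x_1^2+c\xi_1^2,\xi_2,\ldots,\xi_n)$, with $c$ running over a list of representatives. I need to check that these representatives are exactly the sets $X_p$ in \eqref{eq:X}, and that two values of $c$ in $X_p$ give inequivalent normal forms; this gives uniqueness. I also need the normalizing coordinates to be aligned for $F'$. The normal form gives $\dd(\text{$i$-th component})(m)=\dd\xi_i(m)$ for $i\ge 2$. After composing with the linear change $B$ of Lemma~\ref{lemma:aligned}, which puts $F$ in separated form, one can arrange $\dd f_i'(m)=\dd \xi_i(m)$. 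The same argument as in the proof of Lemma~\ref{lemma:aligned} shows the coordinates can be chosen to satisfy this.

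\textbf{Part (ii), reduction to a binary form.} In any aligned coordinates, $f_2',\ldots,f_n'$ have differentials $\dd\xi_2',\ldots,\dd\xi_n'$ at $m$. The condition $\{f_1',f_i'\}=0$ then forces the quadratic part of $f_1'$ at $m$ not to involve $x_2',\ldots,x_n'$ in an essential way. Modulo the ideal generated by $\xi_2',\ldots,\xi_n'$, which the normal-form equivalence allows us to absorb, the quadratic part of $f_1'$ reduces to the binary form
\[Q=\tfrac12\bigl(\alpha x^2+2\beta x\xi+\gamma\xi^2\bigr).\]
So the class of $F$ is determined by the class of $Q$ under two operations: linear symplectic maps of the $(x_1',\xi_1')$-plane (determinant $1$) and multiplication by a nonzero scalar $\lambda$. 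The scalar comes from replacing $f_1'$ by $\lambda f_1'$ plus combinations of the other components.

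\textbf{Part (ii), the criterion.} Since maps of determinant $1$ preserve the discriminant, $Q\sim\lambda(x^2+c\xi^2)$ forces
\[\beta^2-\alpha\gamma=-\lambda^2c.\]
Moreover $Q(1,0)$ is proportional to $\alpha$, so $\alpha$ must lie in $\lambda\cdot\{r^2+cs^2\}$. Conversely, for binary forms, equal discriminant together with representing a common nonzero value gives equivalence under $\mathrm{SL}_2$ (a standard Witt-type argument in dimension $2$). So the condition becomes: $\lambda=\alpha/(r^2+cs^2)$ for some $r,s$. The values represented by $r^2+cs^2$ are closed under inversion, since $1/t=t/t^2$. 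Hence the condition is equivalent to
\[\beta^2-\alpha\gamma+\alpha^2c\,(r^2+cs^2)^2=0\]
having a solution, and expanding $\alpha^2c(r^2+cs^2)^2$ gives exactly $p_{\alpha,\beta,\gamma,c}$. Uniqueness of such $c$ then follows from part (i).

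\textbf{Degenerate case.} If $\alpha=0$ the polynomial degenerates: it has a zero only if $\beta=0$, and that contradicts non-degeneracy. So I would either argue that aligned coordinates can be chosen with $\alpha\ne0$, or note that the statement implicitly requires it. I expect the main obstacle to be the careful reduction to $Q$: justifying that the classification's equivalence acts on the binary form precisely by $\mathrm{SL}_2$ and scalars, and matching the representatives to $X_p$.
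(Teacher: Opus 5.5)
Your proposal is correct. Part (i) matches the paper: you reduce via aligned coordinates to the $(x_1',\xi_1')$-plane and invoke \cite[Theorem D]{CrePel-williamson} for the list $X_p$ and uniqueness.

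\textbf{Part (ii) follows a different route.} The paper stays inside its eigenvector framework. It takes the eigenvector $v=(\lambda-\beta,\alpha)$ of $\Omega_0^{-1}H_1$, where $\lambda^2=\beta^2-\alpha\gamma$, and computes $v\tr\Omega_0\bar v=2\alpha\lambda$. It then feeds this into the criterion of \cite[Proposition 3.6]{CrePel-williamson2}, namely $\lambda^2=-ct^2$ and $2t\lambda/(v\tr\Omega_0\bar v)\in\DSq(\Qp,-\lambda^2)$. This collapses to $t/\alpha\in\DSq(\Qp,c)$, and hence to a zero of $p_{\alpha,\beta,\gamma,c}$.

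You instead prove the needed criterion directly from binary quadratic forms. First, $\mathrm{Sp}_2=\mathrm{SL}_2$ preserves the Gram determinant. Second, a non-degenerate binary form is determined up to $\mathrm{SL}_2$ by its determinant together with one nonzero represented value. To see the second fact, diagonalize starting from a vector representing that value, then rescale the second basis vector so the change of basis has determinant $1$; no reflections are needed.

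What each route buys:
\begin{itemize}
\item Your argument is more elementary and self-contained. It needs no passage to $\Qp(\lambda)$ or choice of conjugation, and it treats the elliptic and hyperbolic cases uniformly.
\item The paper's argument is the one that generalizes to Theorem~\ref{thm:integrable}. There, no invariant symplectic plane is given a priori; it must be located through the eigenvector $v$, and $v\tr\Omega\bar v$ is exactly the quantity that almost eigenvectors approximate.
\end{itemize}

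A minor bookkeeping point: with $Q=\tfrac12(\alpha x^2+2\beta x\xi+\gamma\xi^2)$, the determinant identity picks up a factor $4$ and $Q(1,0)=\alpha/2$. This is harmless; simply drop the $\tfrac12$, as your later formulas implicitly do.

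\textbf{The case $\alpha=0$.} Your remark is right, and it exposes a defect in the statement rather than in your argument. Take $f_1'=x_1'\xi_1'$ in standard coordinates, which are aligned. Then $\alpha=\gamma=0$ and $\beta=1$, so $p_{0,1,0,c}\equiv 1$ has no zero for any $c$, even though the point is non-degenerate (hyperbolic). Part (ii) quantifies over arbitrary aligned coordinates, so you cannot choose coordinates with $\alpha\neq 0$. The hypothesis $\alpha\neq0$ has to be added to the statement. The paper's proof silently needs the same hypothesis, since it divides by $\alpha$ when rewriting the condition as $t/\alpha\in\DSq(\Qp,c)$.
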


\begin{remark}
	The polynomial $p_{\alpha,\beta,\gamma,c}(r,s)$ can be written in a simplified way as
	\[p_{\alpha,\beta,\gamma,c}(r,s)=\alpha^2c(r^2+cs^2)^2-\alpha\gamma+\beta^2.\]
	This is the form in which we will use it, both in the proof of Theorem \ref{thm:integrable1} and in the examples.
\end{remark}

\begin{figure}
	\begin{tabular}{cc}
		\includegraphics[width=0.45\linewidth,trim=5cm 0 5cm 0]{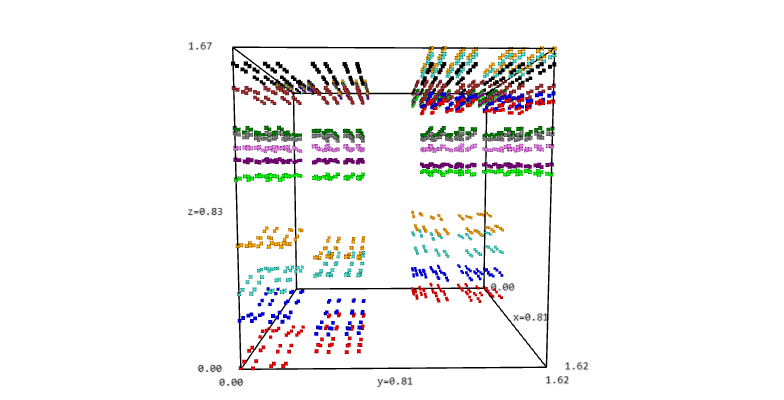} & \includegraphics[width=0.45\linewidth,trim=5cm 0 5cm 0,clip]{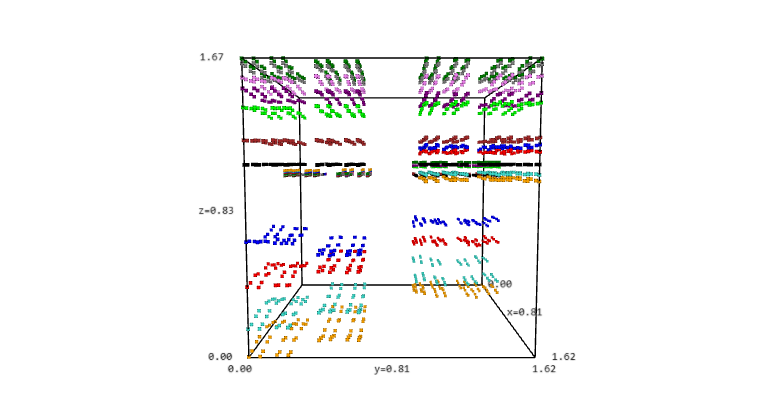} \\
		$p=2,\gamma=1$ & $p=2,\gamma=-1$ \\
		\includegraphics[width=0.45\linewidth,trim=5cm 0 5cm 0]{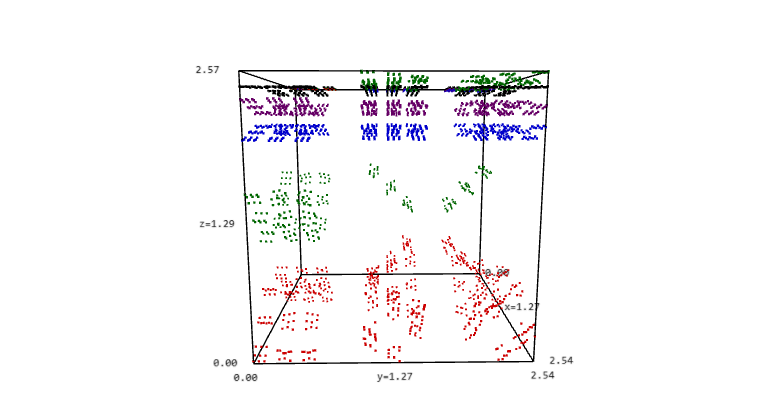} & \includegraphics[width=0.45\linewidth,trim=5cm 0 5cm 0,clip]{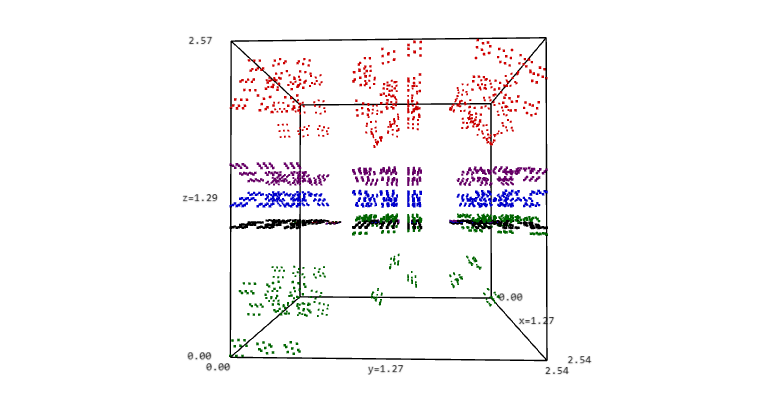} \\
		$p=3,\gamma=1$ & $p=3,\gamma=-1$ \\
		\includegraphics[width=0.45\linewidth,trim=5cm 0 5cm 0]{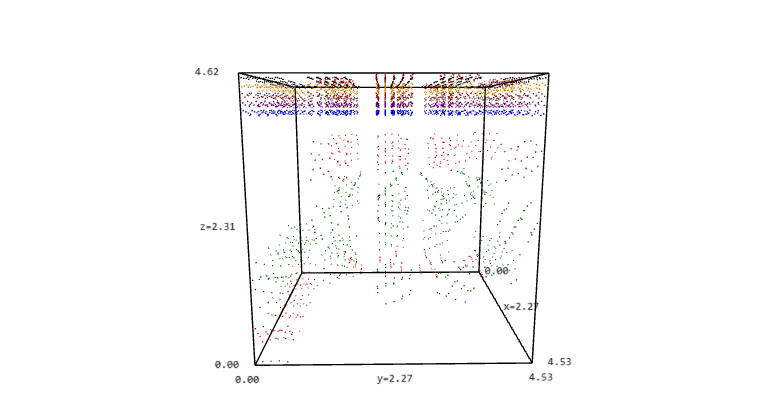} & \includegraphics[width=0.45\linewidth,trim=5cm 0 5cm 0,clip]{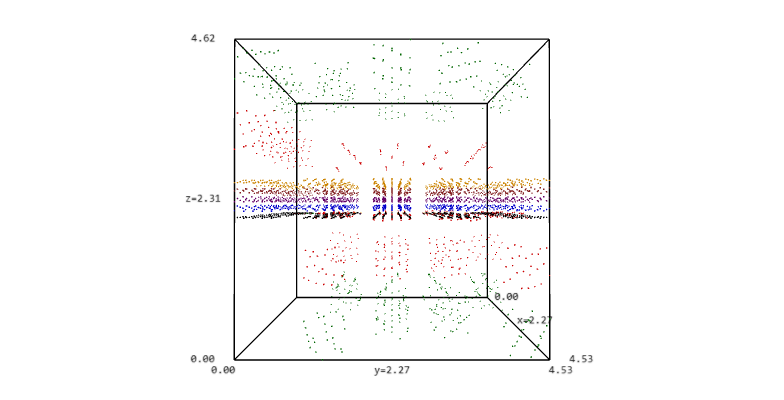} \\
		$p=5,\gamma=1$ & $p=5,\gamma=-2$
	\end{tabular}
	\caption{A representation of the polynomial $p_{1,0,\gamma,c}(r,s)$ for $r,s\in\Zp$. Each figure corresponds to different values of $p$ and $\gamma$, and the colors used in each figure correspond to the values of $c$ in $X_p$. We see that, in each figure, only the points of one color reach $z=0$; this color indicates the $c$ which is needed in the normal form. In these particular cases, this $c$ is precisely $\gamma$. The same figures work for Theorem \ref{thm:integrable}, in this case the polynomial is $p_{c,\sqrt{-\gamma},2\sqrt{-\gamma}}(r,s)$ and the result is the same.}
\end{figure}

Note that we follow Schneider's viewpoint \cite[Section 8]{Schneider} and we do not require that symplectic manifolds are paracompact, as it is often required in the real case.

\subsection{Normal form computation of integrable systems using eigenvector approximations}\label{sec:intro-eigenvectors}

If the corank of the critical point is greater than $1$, unlike what happens in the case of real integrable systems, in order to compute its normal form, we need the know the eigenvectors of a certain matrix, and not only the eigenvalues. This makes the $p$-adic context subtle.

In order to state our second main result we need to introduce the concept of ``independence number'' of a $p$-adic matrix, as well as the concept of almost eigenvector. First we fix some notation.

\begin{itemize}
	\item Given an $m$-by-$n$ matrix $A$ and two subsets $I\subset\{1,\ldots,m\}$ and $J\subset\{1,\ldots,n\}$, we denote by $A_{IJ}$ the submatrix of $A$ obtained taking the rows indexed by $I$ and the columns indexed by $J$.
	\item Likewise, $A_{\cdot J}$ represents the submatrix obtained taking the columns indexed by $J$ and all the rows, and $A_{I\cdot}$ the submatrix obtained taking the rows indexed by $I$ and all the columns.
	\item In particular, for $1\le i\le m$, $A_{i\cdot}$ is the $i$-th row of $A$, and for $1\le j\le n$, $A_{\cdot j}$ is the $j$-th column of $A$.
\end{itemize}

\begin{definition}[Independence number of a $p$-adic matrix]\label{def:indep}
	Let $m,n$ be positive integers such that $m\le n$. \letpprime. Let $\Cp$ be the field of complex $p$-adic numbers. Let $|\cdot|_p$ be the $p$-adic absolute value and let $\|\cdot\|_p$ be the $p$-adic norm (see Appendix \ref{sec:padic}). Let $A\in\M_{m\times n}(\Cp)$ be an $m$-by-$n$ matrix with coefficients in $\Cp$. Suppose that no row of $A$ is identically zero. The \emph{independence number $\Ind_p(A)$ of $A$} is the largest of the $p$-adic absolute values of the determinants of the $m$-by-$m$ minors of $A$ divided by the product of the $p$-adic norms of the rows of $A$. If a row of $A$ is identically zero, the independence number is defined to be zero. That is, $\Ind_p(A)$ is given by the formula:
	\[\Ind_p(A)=\begin{cases}
		\displaystyle\frac{\max_{|J|=m}|\det A_{\cdot J}|_p}{\prod_{i=1}^m\|A_{i\cdot}\|_p} & \text{if no row of $A$ is identically }0; \\
		0 & \text{otherwise.}
	\end{cases}\]
\end{definition}

\begin{remark}
	Intuitively, the independence number $\Ind_p(A)$ of a matrix $A$ represents how far the rows of $A$ are from being linearly dependent. By the properties of the $p$-adic absolute value, it is always between $0$ and $1$, and it is $0$ if and only if the rows are linearly dependent.
\end{remark}

The following notion seems rather technical but there is an idea behind it which we discuss in Section \ref{sec:almost}. As usual, the case $p=2$ is special.

\begin{figure}
	\includegraphics{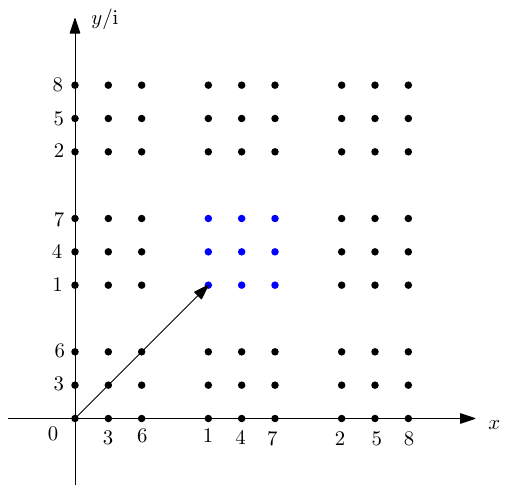}
	\caption{
		$(A,\lambda,\Omega)$-almost eigenvectors for $A=\Omega=\begin{pmatrix}
			0 & 1 \\
			-1 & 0
		\end{pmatrix}\in\M_2(\Q_3)$ and $\lambda=\ii\in\C_3$. The axes represent the ``real'' part of the first coordinate and the ``imaginary'' part of the second coordinate (so the point in the top right represents $(8,8\ii)$). The vector $(1,\ii)$ is the true eigenvector, and the blue points, such as $(4,7\ii)$, represent almost eigenvectors.
	}
\end{figure}

\begin{definition}[Almost eigenvector]\label{def:almost}
	Let $\ell$ be a positive integer. \letpprime. Let $\mathrm{I}_\ell$ be the identity matrix of size $\ell$ and let $\mathrm{e}_i$ be the $i$-th vector of the canonical basis of $(\Qp)^\ell$. Let $q=8$ if $p=2$ and otherwise $q=p$. Let $A\in\M_\ell(\Qp)$. Let $\lambda$ be an eigenvalue of $A$ such that $\lambda\notin\Qp$ and $\lambda^2\in\Qp$. Let $I$ be a subset of $\{1,\ldots,\ell\}$ of size $\ell-1$. Suppose that $\Ind_p(\lambda \mathrm{I}_\ell-A)_{I\cdot}\ne 0$. Let \[h:(\Cp)^\ell\to\R\] be given by
	\[h(v)=h(v_1,\ldots,v_\ell)=\frac{1}{\Ind_p(\lambda \mathrm{I}_\ell-A)_{I\cdot}\|v\|_p}\max_{i \in I}\frac{|\lambda v_i-A_{i\cdot}v|_p}{\|\lambda \mathrm{e}_i-A_{i\cdot}\|_p}.\]
	Let $\Omega\in\M_\ell(\Qp)$. Let $k$ be the largest absolute value of an entry of $\Omega$. We say that $v_0\in(\Cp)^\ell$ is an $(A,\lambda,\Omega,I)$-\textit{almost eigenvector} if
	\[h(v_0)\le \min\left\{1,\frac{|v_0\tr\Omega \bar{v}_0|_p}{kq\|v_0\|_p^2}\right\},\]
	where $x\mapsto\bar{x}$ is any automorphism of $\Cp$ that fixes $\Qp$ and sends $\lambda$ to $-\lambda$. We say that $v_0$ is an $(A,\lambda,\Omega)$-\textit{almost eigenvector} if there exists a subset $I$ of $\{1,\ldots,\ell\}$ of size $\ell-1$ such that $v_0$ is an $(A,\lambda,\Omega,I)$-almost eigenvector.
\end{definition}

Now we are ready to state our second main theorem. The following statement does not involve aligned symplectic coordinates, however, the proof will require them.

\begin{maintheorem}[Normal form computation for critical points based on almost eigenvectors]\label{thm:integrable}
	\letnpos. \letpprime. Let $(M,\omega)$ be a $p$-adic analytic symplectic manifold of dimension $2n$. Let $F=(f_1,\ldots,f_n):(M,\omega)\to(\Qp)^n$ be a $p$-adic analytic integrable system. Let $m$ be a non-degenerate critical point of $F$. Let $a_1,\ldots,a_n\in\Qp$ such that $m$ is a non-degenerate critical point of $\sum_{i=1}^n a_if_i$ in the symplectic sense. Let $\Omega$ be the matrix of $\omega_m$ and $A=\Omega^{-1}\dd^2 \sum_{i=1}^n a_if_i$. Suppose that there exists an eigenvalue $\lambda$ of $A$ such that $\lambda\notin\Qp$ and $\lambda^2\in\Qp$. Then the following hold.
	
	\begin{enumerate-roman}
		\item Let $X_p$ be as described in \eqref{eq:X}. Then there exist a unique parameter $c\in X_p$, an open neighborhood $U$ of $m$, and local symplectic coordinates $(x_1,\xi_1,\ldots,x_n,\xi_n)$ on $U$ centered at $m$ such that, in these coordinates, one of the components of $F$ has the form $(x_1,\xi_1,\ldots,x_n,\xi_n)\mapsto x_1^2+c\xi_1^2$.
		\item Suppose that there exists $v_0\in(\Cp)^{2n}$ which is an $(A,\lambda,\Omega)$-almost eigenvector according to Definition \ref{def:almost} with $\ell=2n$. Let $x\mapsto\bar{x}$ be an automorphism of $\Cp$ that fixes $\Qp$ and sends $\lambda$ to $-\lambda$. Let $k=v_0\tr\Omega \bar{v}_0$.
		The parameter $c\in X_p$ in part \textup{(i)} is the element $c\in X_p$ for which the $p$-adic biquadratic polynomial
		\[p_{c,\lambda,k}(r,s)=(ck^2)r^4+(2c^2k^2)r^2s^2+(c^3k^2)s^4+4\lambda^4\]
		has a zero in $(\Qp)^2$.
	\end{enumerate-roman}
\end{maintheorem}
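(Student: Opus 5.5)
The plan is to reduce both parts to a single block of the Williamson normal form and then identify that block through Galois conjugation and an analytic perturbation estimate. For part (i) I use the classification of \cite{CrePel-williamson,CrePel-williamson2}. Since $m$ is non-degenerate, the linearization $A=\Omega^{-1}\dd^2\sum a_if_i$ splits symplectically into blocks, and the blocks determine the local normal form. The pair of eigenvalues $\pm\lambda$ with $\lambda\notin\Qp$ and $\lambda^2\in\Qp$ lies in a $2$-dimensional symplectic invariant subspace. On that subspace the quadratic form $\dd^2\sum a_if_i$ is equivalent to $a(x_1^2+c\xi_1^2)$ for some scalar $a$, with $-c\in\Qp$ a non-square up to squares. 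The key point is that the only invariants of such a block under linear symplectic changes are the discriminant $\lambda^2$ modulo the norms from $\Qp(\lambda)$ and the scaling. This leaves exactly the finite list $X_p$, which I can read off from \cite[Theorems A--H]{CrePel-williamson}. Uniqueness of $c$ follows because distinct elements of $X_p$ give symplectically inequivalent blocks. Finally, the Eliasson-type step in the cited classification promotes the linear normal form to analytic coordinates in which some component of $F$, after a linear change, equals $x_1^2+c\xi_1^2$.

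For part (ii) I first treat the exact case, where $v$ is a true eigenvector with $Av=\lambda v$. Then $\bar v$ is an eigenvector for $-\lambda$. If we write $v=u+\lambda w$ with $u,w\in(\Qp)^{2n}$, the vectors $u,w$ span the invariant real plane. The symplectic pairing on that plane is measured by $k=v\tr\Omega\bar v$, which is a multiple of $\lambda\,\omega(u,w)$ and satisfies $\bar k=-k$.

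I then compute which $c$ makes the block $x_1^2+c\xi_1^2$ compatible with this data. In normal coordinates the block has eigenvector $(1,\lambda/(2c))$ or similar, and $\lambda^2=-4c$ up to a normalization constant. A general eigenvector in the $\Qp$-structure is $(r+\lambda s')$ times the normal one, scaled. Its pairing $k$ is the normal pairing multiplied by the norm $N(r+\lambda s)=r^2-\lambda^2 s^2=r^2+cs^2$ (after rescaling $\lambda^2=-4c\mu^2$ appropriately). Squaring to remove the sign ambiguity from $\lambda$ should produce the identity $ck^2(r^2+cs^2)^2+4\lambda^4=0$, that is, $p_{c,\lambda,k}(r,s)=0$. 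This is the same mechanism as in Theorem \ref{thm:integrable1} and its remark. So $p_{c,\lambda,k}$ has a zero for the correct $c$. For any other $c'\in X_p$, a zero would make $k^2$ a norm times an incompatible class, contradicting the uniqueness in (i).

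The main obstacle is that $v_0$ is only an almost eigenvector. Here I use Theorem \ref{thm:eigen}, which is stated to be proved in Section \ref{sec:approx}. It provides a true eigenvector $v$ with $\|v-v_0\|_p\le h(v_0)\|v_0\|_p$. The independence number appears because the rows indexed by $I$ of $\lambda\mathrm I-A$ determine the eigenline stably, by Cramer's rule on the maximal minors. I then bound $|v\tr\Omega\bar v-v_0\tr\Omega\bar v_0|_p\le k_{\max}\|v_0\|_p^2h(v_0)$. Combined with the almost-eigenvector hypothesis, this gives
\[
|v\tr\Omega\bar v-k|_p\le |k|_p/q.
\]
Because $q=p$ for odd $p$ and $q=8$ for $p=2$, the ratio of the two values of $k$ is then a $1$-unit close enough to $1$ to be a square in $\Qp$; this holds by Hensel's lemma, with the $p=2$ case needing congruence mod $8$. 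Replacing $k$ by $k$ times a square can be absorbed into $(r,s)$, so it does not change which $p_{c,\lambda,k}$ has a zero. I expect this approximation step, and especially checking that the conjugation $\bar{\ }$ and the rescaling interact correctly so that only a square factor appears, to be the delicate part.
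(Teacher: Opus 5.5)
Your approximation step is the same as the paper's. Theorem~\ref{thm:eigen} gives $\|v-v_0\|_p\le h(v_0)\|v_0\|_p$, the three-term ultrametric bound gives $|v\tr\Omega\bar v-k|_p\le|k|_p/q$, and a square factor is absorbed into $(r,s)$. The gaps are in the exact-eigenvector part, which the paper handles by citation and you only sketch.

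\textbf{Part (i).} You never connect the $\pm\lambda$ block of $\sum a_if_i$ to a component of $F$. The paper does this by differentiating $\{f_i,f_j\}=0$ twice, which shows that every $\Omega^{-1}\dd^2 f_i(m)$ commutes with $A$. Since $A$ has simple spectrum, all of these matrices share the eigenvectors $v,\bar v$, and a single $\Qp$-symplectic change normalizes the block for every component. The paper only obtains $x_1^2+c\xi_1^2+\ocal(3)$ and uses no Eliasson-type step.

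\textbf{Part (ii), exact case.} Your norm computation proves only ``correct $c\Rightarrow$ zero''. Saying that a zero for $c'\ne c$ ``contradicts uniqueness'' presupposes that such a zero actually produces the normal form $x^2+c'\xi^2$. That is the converse half of \cite[Proposition 3.6]{CrePel-williamson2}, which the paper cites together with \cite[Proposition 3.10]{CrePel-williamson2}. It is easy to supply:
\begin{itemize}
\item For an eigenvector $v=u+\lambda w$ with $u,w\in(\Qp)^{2n}$, one has $Aw=u$, $Au=\lambda^2w$, and $v\tr\Omega\bar v=2\lambda\kappa$ with $\kappa=\omega(w,u)\in\Qp$.
\item In the $\Qp$-symplectic basis $(w,u/\kappa)$ of the block, the quadratic part of $\sum a_if_i$ is $\frac{\kappa}{2}(x^2-\lambda^2\kappa^{-2}\xi^2)$.
\item If $p_{c',\lambda,2\lambda\kappa}(r,s)=0$, then $\lambda^2=-c'\kappa^2N^2$ with $N=r^2+c's^2$, so $\sqrt{-c'}\in\Qp(\lambda)$.
\item Replacing $v$ by $(r+s\sqrt{-c'})v$ multiplies $\kappa$ by $N$ and turns the block into $\frac{N\kappa}{2}(x^2+c'\xi^2)$.
\end{itemize}
So the relevant invariant is $\kappa$ modulo norms from $\Qp(\lambda)$, not $\lambda^2$ modulo norms.

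\textbf{Rationality of the eigenvector.} This is the more serious problem. Everything above, including your own decomposition $v=u+\lambda w$, requires the eigenvector to have entries in $\Qp(\lambda)$. Only then is $k\in\lambda\Qp$, and only then does rescaling change $k$ by a norm. But Theorem~\ref{thm:eigen} applied to an arbitrary $v_0\in(\Cp)^{2n}$ gives $v\in(\Cp)^{2n}$. In that case ``the ratio $v\tr\Omega\bar v/k$ is a square in $\Qp$'' is unjustified. This is exactly the step you flagged as delicate, and it genuinely fails:
\begin{itemize}
\item Take $p=3$, $n=1$, $f_1=x^2+\xi^2$. Then $\lambda=2\ii$, $v=(1,-\ii)$ and $v\tr\Omega\bar v=2\ii$, and $p_{c,\lambda,2\ii}$ has a zero only for $c=1$, as it should.
\item Now take $v_0=\sqrt3\,v$. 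It has $h(v_0)=0$, so it is an almost eigenvector, but $k=\pm6\ii$.
\item Then $p_{c,\lambda,k}(r,s)=-36c(r^2+cs^2)^2+64$. It vanishes only for $c=9$ (at $r=2/3$, $s=0$), because $r^2+s^2$ always has even $3$-adic valuation.
\end{itemize}
The paper's proof makes the same leap (``their quotient is a square in $\Qp$''). The repair is to assume $v_0\in\Qp(\lambda)^{2n}$, as in all the paper's examples. Then the eigenvector $v=(-B_{11}^{-1}B_{12}v_2,v_2)$ built in the proof of Lemma~\ref{lemma:main} also lies in $\Qp(\lambda)^{2n}$, since the row scalings cancel in $B_{11}^{-1}B_{12}$. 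Both $k$ and $v\tr\Omega\bar v$ are sent to their negatives by $\bar{\ }$, so both lie in $\lambda\Qp$. Their ratio therefore lies in $1+q\Zp$, and your Hensel step goes through.
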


\begin{remark}
	We can write $p_{c,\lambda,k}$ in a simplified form
	\[p_{c,\lambda,k}(r,s)=ck^2(r^2+cs^2)^2+4\lambda^4,\]
	which is how we will use it.
\end{remark}

Theorem \ref{thm:integrable} allows us to compute the normal form of a concrete type of critical points (in any dimension and any rank) using \emph{an approximation} of the eigenvectors instead, hence giving us a potentially powerful tool to deal with systems from physics such as the Jaynes-Cummings model and the coupled angular momentum system, where exact information is either not available or difficult to find. A more technical result Proposition \ref{prop:integrable} deals with every type of rank $0$ critical point in dimension $4$, where we already know the full classification; together with Theorem \ref{thm:integrable1} this covers all critical points in dimension $4$.

Theorems \ref{thm:integrable1} and \ref{thm:integrable} are proved in Section \ref{sec:integrable}. These results should be applicable, for example, to the $p$-adic Jaynes-Cummings system \cite{CrePel-JC} and the $p$-adic coupled angular momentum system \cite{CrePel-angular}.

\section{Approximation of $p$-adic equations and eigenvectors}\label{sec:approx}

The following result about approximation of $p$-adic eigenvectors is the key part of the proof of Theorem \ref{thm:integrable}. It gives an upper bound on the distance from a given vector $v_0$ to en eigenvector of a given matrix $A$ in terms of the distance from $Av_0$ to $\lambda v_0$ and the independence number of $\lambda \mathrm{I}_n-A$, where $\lambda$ is an eigenvalue of $A$. This result is also of independent interest and can be applied to problems outside of $p$-adic integrable systems and $p$-adic symplectic geometry.

\begin{theorem}[Approximation of $p$-adic eigenvectors]\label{thm:eigen}
	\letnpos. \letpprime. Let $A\in\M_n(\Cp)$. Let $v_0=(v_{01},\ldots,v_{0n})\in(\Cp)^n$. Let $\lambda$ be an eigenvalue of $A$. Let $r$ be the rank of $\lambda \mathrm{I}_n-A$. Let $I\subset\{1,\ldots,n\}$ of size $r$. Suppose that $\Ind_p((\lambda\mathrm{I}_n-A)_{I\cdot})\ne 0$. Then there exist infinitely many vectors
	\[v\in\ballcp\left(v_0,\frac{1}{\Ind_p((\lambda\mathrm{I}_n-A)_{I\cdot})}\max_{i \in I}\frac{|\lambda v_{0i}-A_{i\cdot}v_0|_p}{\|\lambda \mathrm{e}_i-A_{i\cdot}\|_p}\right)\]
	such that $v$ is an eigenvector corresponding to $\lambda$.
\end{theorem}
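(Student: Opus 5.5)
The plan is to reduce the eigenvector condition to a linear system in $r$ independent equations and to solve it by Cramer's rule, after normalizing the rows so that the independence number becomes the absolute value of a single determinant. Write $B=(\lambda\mathrm{I}_n-A)_{I\cdot}\in\M_{r\times n}(\Cp)$.

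\emph{Step 1: the kernel.} Since $\Ind_p(B)\ne 0$, no row of $B$ vanishes and some $r$-by-$r$ minor of $B$ is nonzero, so the rows of $B$ are linearly independent. Because $\lambda\mathrm{I}_n-A$ has rank $r$, every other row is a linear combination of the rows of $B$. Hence $\ker(\lambda\mathrm{I}_n-A)=\ker B$, and it suffices to produce many nonzero $v$ in the ball with $Bv=0$. Also $r<n$, since $\lambda$ is an eigenvalue, so $\ker B\neq 0$.

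\emph{Step 2: normalization.} For each $i\in I$, choose $c_i\in\Cp$ with $|c_i|_p=\|B_{i\cdot}\|_p=\|\lambda\mathrm{e}_i-A_{i\cdot}\|_p$. This is possible because the value group of $\Cp$ is $p^{\Q}$ and norms of vectors in $(\Cp)^n$ lie in it. Let $B'$ be $B$ with row $i$ divided by $c_i$. Then every row of $B'$ has norm $1$, so all entries of $B'$ lie in the ring of integers of $\Cp$. Moreover, $\Ind_p(B)=\max_{|J|=r}|\det B'_{\cdot J}|_p$. Fix $J$ attaining this maximum, so that $|\det B'_{\cdot J}|_p=\Ind_p(B)\ne0$.

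\emph{Step 3: correcting $v_0$.} Put $w=B'v_0$, whose entries are $w_i=(\lambda v_{0i}-A_{i\cdot}v_0)/c_i$. Then $\max_i|w_i|_p=\max_{i\in I}|\lambda v_{0i}-A_{i\cdot}v_0|_p/\|\lambda\mathrm{e}_i-A_{i\cdot}\|_p$. Solve $B'_{\cdot J}u=w$ by Cramer's rule. Expanding each numerator determinant along the replaced column expresses $u_j$ as $\sum_i \pm w_i C_{ij}/\det B'_{\cdot J}$, where the $C_{ij}$ are $(r-1)$-minors of $B'$. These minors are integral, so $|C_{ij}|_p\le1$, and the ultrametric inequality gives
\[\|u\|_p\le\frac{1}{\Ind_p(B)}\max_{i\in I}\frac{|\lambda v_{0i}-A_{i\cdot}v_0|_p}{\|\lambda\mathrm{e}_i-A_{i\cdot}\|_p}=:\rho.\]
Let $\tilde u\in(\Cp)^n$ be $u$ placed in the coordinates $J$ and zero elsewhere. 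Then $v_1=v_0-\tilde u$ satisfies $B'v_1=0$, hence $Bv_1=0$, and $\|v_1-v_0\|_p\le\rho$.

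\emph{Step 4: infinitely many eigenvectors (the delicate point).} Take a nonzero $z\in\ker B$ and consider $v_t=v_1+tz$ for $t\in\Cp$ with $|t|_p\|z\|_p\le\rho$. Each $v_t$ lies in $\ker B$ and in the ball around $v_0$ of radius $\rho$. Since $z\neq0$, at most one value of $t$ gives $v_t=0$, so we obtain infinitely many eigenvectors when $\rho>0$.

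The only real obstacle is the degenerate case $\rho=0$, where $v_0$ is already in the kernel. Here the ball must be read in a way that still contains a positive-radius neighbourhood (for instance, the conventions of Appendix~\ref{sec:padic} for $\ballcp$). Otherwise the statement should be understood as ``$v_0$ itself is an eigenvector'', and I would check the appendix's convention to settle this. The other care point is the existence of the scalars $c_i$ with prescribed absolute value, which is exactly why the theorem is stated over $\Cp$ rather than $\Qp$.
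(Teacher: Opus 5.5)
Your argument is correct and is essentially the paper's proof, which goes through Lemma~\ref{lemma:main}: the paper also rescales the rows to norm $1$ and picks the $r\times r$ minor attaining $\Ind_p$. It then takes $v=(-B_{11}^{-1}B_{12}v_2,v_2)$, which is exactly your corrected vector $v_0-\tilde u$, and bounds the entries of $B_{11}^{-1}$ by $1/\Ind_p$, just as your Cramer's-rule expansion does. Your Step~4 (the line $v_0-\tilde u+tz$, discarding the at most one zero vector) spells out the ``infinitely many eigenvectors'' claim that the paper only asserts. Your caveat about $\rho=0$ is also legitimate: the appendix defines balls only for positive radius, and your fallback reading that ``$v_0$ itself is an eigenvector'' additionally needs $v_0\neq0$, since for $v_0=0$ no eigenvector lies in $\{v_0\}$.
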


\begin{example}\label{ex:4a}
	Consider $p=2$ and the matrix
	\[A=\begin{pmatrix}
		2 & 0 & 0 & 0 \\
		1 & 2 & 0 & 0 \\
		0 & 1 & 2 & 0 \\
		0 & 0 & 1 & 2
	\end{pmatrix}.\]
	It has only one eigenvalue, $2$, and
	\[2\mathrm{I}_4-A=\begin{pmatrix}
		0 & 0 & 0 & 0 \\
		-1 & 0 & 0 & 0 \\
		0 & -1 & 0 & 0 \\
		0 & 0 & -1 & 0
	\end{pmatrix}.\]
	
	Let $v_0=(a,b,c,d)\in(\Q_2)^4$. If we take $I=\{2,3,4\}$, the independence number of those rows is $1$ and Theorem \ref{thm:eigen} implies that
	\[\|v-v_0\|_2\le\max\{|a|_2,|b|_2,|c|_2\}.\]
	For any $v_0$, this bound is attained with equality because the eigenvectors are of the form $(0,0,0,d)$.
\end{example}

In order to prove Theorem \ref{thm:eigen}, we need a previous result. Recall that the independence number of a matrix used in the following result was defined in Definition \ref{def:indep}. Also, the notation $A_{I\cdot}$ was introduced at the beginning of Section \ref{sec:intro-eigenvectors}.

\begin{lemma}[Approximation of solutions to $p$-adic equations]\label{lemma:main}
	Let $m,n,r$ be positive integers with $1\le r\le m$. \letpprime. Let $A\in\M_{m\times n}(\Cp)$ with rank $r$. Let $v_0\in(\Cp)^n$. Let $I\subset\{1,\ldots,m\}$ be a set of size $r$. Suppose that $\Ind_p(A_{I\cdot})\ne 0$. Then
	\[\Big\{x\in(\Cp)^n:Ax=0\Big\}\cap\ballcp\left(v_0,\frac{1}{\Ind_p(A_{I\cdot})}\max_{i \in I}\frac{|A_{i\cdot}v_0|_p}{\|A_{i\cdot}\|_p}\right)\ne\varnothing.\]
	If $r=n$, $Ax=0$ has a unique solution and this intersection is a single point. Otherwise, it has dimension $n-r>0$ and it contains infinitely many points.
\end{lemma}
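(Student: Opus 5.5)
Write $B=A_{I\cdot}\in\M_{r\times n}(\Cp)$ and set
\[\delta=\max_{i\in I}\frac{|A_{i\cdot}v_0|_p}{\|A_{i\cdot}\|_p}.\]

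The first step is a reduction. Since $A$ has rank $r$ and $\Ind_p(B)\ne0$, the $r$ rows of $B$ are linearly independent. They therefore span the row space of $A$, and so $\{Ax=0\}=\{Bx=0\}$. It is enough to find $x$ with $Bx=0$ and $\|x-v_0\|_p\le\delta/\Ind_p(B)$.

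Next, scaling each row of $B$ by a scalar does not change the kernel, $\delta$, or $\Ind_p(B)$. We may therefore normalize so that $\|B_{i\cdot}\|_p=1$ for all $i\in I$. Then $\delta=\|Bv_0\|_p$, and $\Ind_p(B)=\max_{|J|=r}|\det B_{\cdot J}|_p$.

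Now choose $J$ of size $r$ attaining this maximum, and let $D=B_{\cdot J}$, so $|\det D|_p=\Ind_p(B)>0$. We look for $x=v_0-w$, where $w$ is supported on the coordinates in $J$ and solves $Dw_J=Bv_0$. Then $Bx=Bv_0-Dw_J=0$.

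The key estimate is to bound $\|w\|_p$. By Cramer's rule, $w_J=D^{-1}Bv_0=\frac{1}{\det D}\operatorname{adj}(D)\,Bv_0$. Every entry of $\operatorname{adj}(D)$ is an $(r-1)$-minor of a matrix whose entries have absolute value at most $1$. By the ultrametric inequality, each such entry has absolute value at most $1$. Hence
\[\|w\|_p\le\frac{\|Bv_0\|_p}{|\det D|_p}=\frac{\delta}{\Ind_p(B)}.\]
This places $x$ in the closed ball $\ballcp(v_0,\delta/\Ind_p(B))$, with the paper's convention for balls in Appendix \ref{sec:padic}. The care needed is only in normalizing the rows; the maximality of $J$ is not strictly needed for the bound, but it guarantees $\det D\ne0$.

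Finally, the dimension statement. If $r=n$, the kernel is $\{0\}$, so the intersection is that single point. If $r<n$, the kernel is a $\Cp$-subspace of dimension $n-r>0$. We already have a point $x_0$ of the kernel in the ball. By the ultrametric property, the ball is also centered at $x_0$, so it contains $x_0+y$ for every kernel vector $y$ with $\|y\|_p\le\delta/\Ind_p(B)$. There are infinitely many such $y$: take scalar multiples of a nonzero kernel vector by small elements of $\Cp$.

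There is one degenerate case: $\delta=0$. Then $v_0$ itself lies in the kernel, and the ball has radius zero and is just $\{v_0\}$. In that case the intersection is $\{v_0\}$, and "infinitely many" would require a positive radius. I would handle this by noting that the statement implicitly assumes $\delta>0$, or by interpreting the claim as nonemptiness in that case.
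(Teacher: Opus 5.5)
Your proof is correct and is essentially the paper's argument: the paper also rescales the rows to norm $1$ and takes an $r\times r$ minor $B_{11}$ of maximal absolute value $\Ind_p(A_{I\cdot})$. It then sets $v=(-B_{11}^{-1}B_{12}v_2,v_2)$, which is exactly your $v_0-w$, and bounds the entries of $B_{11}^{-1}$ by $1/\Ind_p(A_{I\cdot})$. You go beyond the paper in two ways: you justify the ``infinitely many points'' clause, and you rightly flag that this clause fails when $\max_{i\in I}|A_{i\cdot}v_0|_p=0$, since the radius is then zero, which the paper's definition of $\ballcp$ does not even allow. One small correction to your aside: the maximality of $J$ is what makes $|\det D|_p=\Ind_p(A_{I\cdot})$, so it is needed for the stated radius and not just for $\det D\ne 0$.
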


\begin{figure}
	\includegraphics{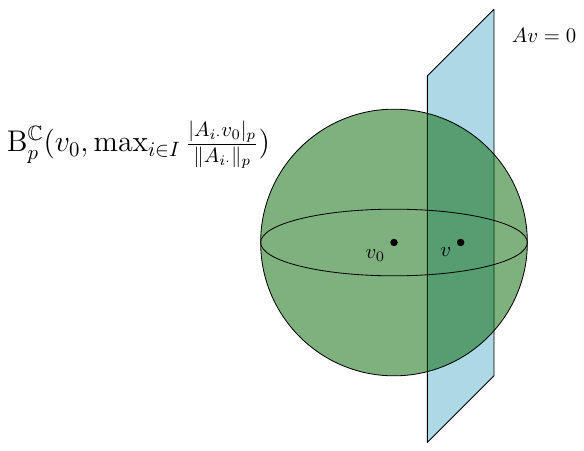}
	\caption{The equation $Av=0$ always has a solution in the ball $\ballcp(v_0,\frac{1}{c}\max_{i \in I}\frac{|A_{i\cdot}v_0|_p}{\|A_{i\cdot}\|_p})$, according to Lemma \ref{lemma:main}.}
\end{figure}

\begin{proof}
	\textit{Step 1: Preliminary observation.} First we observe that multiplying a row of a matrix by a nonzero constant $k$ does not change its independence number, as given in Definition \ref{def:indep}. This is the case because the determinants of its minors are multiplied by $k$, so their $p$-adic absolute value is multiplied by $|k|_p$, and the product of the $p$-adic norms of its rows is multiplied by the same factor.
	
	\medskip
	\textit{Step 2: Construction of auxiliary matrix.} Let \[c=\Ind_p(A_{I\cdot}).\] Let $B$ be a matrix obtained from $A$ by scaling each row by some factor so that the $p$-adic norm of all nonzero rows of $B$ is $1$. In order to achieve norm $1$ in all the rows, we must make \[B_{i\cdot}=c_iA_{i\cdot}\] for some $c_i\in\Cp$ such that \[|c_i|_p=\frac{1}{\|A_{i\cdot}\|_p}.\]
	For example, we can write $\|A_{i\cdot}\|_p=p^k$ for $k\in\Q$. We can compute $p^k$ as a power of $p$ in $\overline{\Qp}$, and satisfies that $|p^k|_p=p^{-k}$, so we can take \[B_{i\cdot}=p^kA_{i\cdot}.\] $B$ still has rank $r$, and since multiplying the rows of a matrix by arbitrary constants does not change its independence number, $\Ind_p(B_{I\cdot})=\Ind_p(A_{I\cdot})=c$. Since the rows have $p$-adic norm $1$, this in turn means that the greatest $p$-adic absolute value of the $r$-by-$r$ minors contained in those rows is $c$.
	
	\medskip
	\textit{Step 3: Construction of vector $v$.} Suppose without loss of generality that $I=\{1,\ldots,r\}$ and that the $p$-adic absolute value $c$ is attained at the minor formed by the first $r$ rows and columns of $B$. Then there exist matrices
	\[B_{11}\in\M_r(\Cp),\quad B_{12}\in\M_{r\times(n-r)}(\Cp),\quad B_{21}\in\M_{(n-r)\times r}(\Cp),\quad B_{22}\in\M_{n-r}(\Cp)\]
	and vectors $v_1\in(\Cp)^r$ and $v_2\in(\Cp)^{n-r}$ such that $B_{11}$ is invertible and
	\[B=\begin{pmatrix}
		B_{11} & B_{12} \\
		B_{21} & B_{22}
	\end{pmatrix}\text{ and }
	v_0=\begin{pmatrix}
		v_1 \\ v_2
	\end{pmatrix}.\]
	Let \[v_3=-B_{11}^{-1}B_{12}v_2\in(\Cp)^r\] and $v=(v_3,v_2)\in(\Cp)^n$. Then we have
	\begin{equation}\label{eq:B}
		\begin{pmatrix}
			B_{11} & B_{12}
		\end{pmatrix}v=B_{11}v_3+B_{12}v_2=0.
	\end{equation}
	Since the rank of $B$ is $r$, its first $r$ rows generate all rows, hence expression \eqref{eq:B} implies $Bv=0$, and $Av=0$ because the rows of $A$ are multiples of those of $B$.
	
	\medskip
	\textit{Step 4: Conclusion.} We now have that
	\begin{align*}
		\|v-v_0\|_p & =\|(-B_{11}^{-1}B_{12}v_2-v_1,0)\|_p \\
		& =\|B_{11}^{-1}B_{12}v_2+v_1\|_p \\
		& =\|B_{11}^{-1}(B_{12}v_2+B_{11}v_1)\|_p.
	\end{align*}
	Since all entries of $B_{11}$ have $p$-adic absolute value at most $1$ and its determinant has $p$-adic absolute value $c$, the $p$-adic absolute value of the entries of $B_{11}^{-1}$ is at most $1/c$, so
	\begin{align*}
		\|v-v_0\|_p & \le \frac{1}{c}\|B_{12}v_2+B_{11}v_1\|_p \\
		& =\frac{1}{c}\max_{1\le i\le r}|B_{i\cdot}v_0|_p \\
		& =\frac{1}{c}\max_{1\le i\le r}\frac{|A_{i\cdot}v_0|_p}{\|A_{i\cdot}\|_p}.\qedhere
	\end{align*}
\end{proof}

\begin{example}\label{ex:4}
	We consider again $p=2$ and the same matrix $A$ in Example \ref{ex:4}.
	\begin{enumerate}
		\item The independence number of $A_{\{1\},\cdot}$ is $1$ because the $p$-adic norm of the first row coincides with the $p$-adic absolute value of $2$.
		\item The independence number of $A_{\{1,2\},\cdot}$ is $1/2$ because the determinant of the first two columns is $4$, with absolute value $1/4$, the rest of determinants are $0$, and the product of the norms is $1/2\cdot 1=1/2$.
		\item The independence number of $A_{\{1,2,3\},\cdot}$ is $1/4$, because now the only nonzero determinant is $8$ and the product of the norms is $1/2\cdot 1\cdot 1=1/2$.
		\item The independence number of $A$ is $1/8$, because the only determinant (that of the entire $A$) is $16$ and the product of the norms is $1/2\cdot 1\cdot 1\cdot 1=1/2$.
	\end{enumerate}
	The matrix has rank $r=4$. If we apply Lemma \ref{lemma:main} to $v_0=(8,4,6,5)$, we can only take $I=\{1,2,3,4\}$, and, as we have said, the independence number of this minor (the entire matrix) is $1/8$.
	Lemma \ref{lemma:main} tells us that
	\begin{align*}
		\|v-v_0\|_2 & \le 8\max_{1\le i\le 4}\frac{|A_{i\cdot}v_0|_2}{\|A_{i\cdot}\|_2} \\
		& =8\max\left\{\frac{1/16}{1/2},\frac{1/16}{1},\frac{1/16}{1},\frac{1/16}{1}\right\}=1.
	\end{align*}
	This is what we expect, because the vector $v$ such that $Av=0$ must be $0$, and $\|v_0\|_2=1$.
\end{example}

Theorem \ref{thm:eigen} is a consequence of Lemma \ref{lemma:main} applied to the matrix $\lambda \mathrm{I}_n-A$. A vector such that $(\lambda \mathrm{I}_n-A)v=0$ is exactly an eigenvector of $A$ corresponding to $\lambda$.

We refer to Section \ref{sec:real} for a real valued analog of Theorem \ref{thm:eigen} which is not needed for the remaining of this paper, but which be believe is of independent interest.

\section{Proofs of Theorems \ref{thm:integrable1}, \ref{thm:integrable} and the $4$-dimensional case}\label{sec:integrable}

In the real case, the Williamson type of a critical point of an integrable system, i.e., the local normal form, is determined by the eigenvalues of the matrix $\Omega^{-1}\dd^2g$, where $\Omega$ is the matrix of the symplectic form and $g$ is a linear combination of the functions (see for example \cite[Section 2]{LeFPel}). Actually, we do not need the exact eigenvalues, only whether they are real, imaginary or complex. This means that we can classify the critical point having only an approximation to the eigenvalues.

In the $p$-adic case, if the corank of the critical point is at least $2$, these eigenvalues are not enough to determine the local normal form of the critical point, and we also need to use their corresponding eigenvectors (see \cite[Propositions 3.6, 4.2]{CrePel-williamson2} for the relation between eigenvectors and Williamson type), which means that an approximation of the eigenvalues (and, hence, of the eigenvectors) is not enough in this case. In this section we prove Theorem \ref{thm:integrable1}, which gives us a method to compute the normal form of corank $1$ critical points, and Theorem \ref{thm:integrable}, which shows that, for some critical points, an approximation of the eigenvectors is sufficient in order to compute the normal form. We also give a more general result in dimension $4$, Proposition \ref{prop:integrable}, which covers all possible cases with rank $0$.

\subsection{Review of the Weierstrass-Williamson classification}

Here we provide a quick review of the ingredients of our paper \cite{CrePel-williamson} which we need to state our results about integrable systems.

\begin{definition}[Non-degenerate critical point of $p$-adic analytic function, symplectic sense]\label{def:nondeg}
	Let $n$ be a positive integer. \letpprime. Let $(M,\omega)$ be a $2n$-dimensional $p$-adic analytic symplectic manifold. Let $f:M\to\Qp$ be a $p$-adic analytic function and let $m\in M$ be a critical point of $f$ (i.e. $\dd f(m)=0$). Let $\Omega$ be the matrix of $\omega$. We say that $m$ is \emph{non-degenerate} if the eigenvalues of $\Omega^{-1}\dd^2 f(m)$ are all distinct.
\end{definition}

See Proposition \ref{prop:nondeg} for the relation between this and the usual definition of non-degenerate critical point.

\begin{definition}[$p$-adic non-degenerate critical point {\cite[Definition 5.2]{CrePel-williamson}}]\label{def:nondeg-integrable}
	\letnpos. \letpprime. Let $(M,\omega)$ be a $p$-adic analytic symplectic manifold of dimension $2n$, $F=(f_1,\ldots,f_n):M\to(\Qp)^n$ a $p$-adic analytic integrable system and $m$ a rank $0$ critical point of $F$. Let $\Omega$ be the matrix of $\omega_m$. We say that $m$ is \emph{non-degenerate} if for every generic choice of $a_i\in\Qp$ the $p$-adic matrix
	$\Omega^{-1}\sum_{i=1}^{n}a_i\dd^2f_i$
	has $2n$ different eigenvalues. If $m$ has rank $r>0$ and $\dd f_1,\ldots,\dd f_r$ are linearly independent, we consider local symplectic coordinates $(x_1,\xi_1,\ldots,x_n,\xi_n)$ centered at $m$ such that $\dd f_i=\dd \xi_i$ for all $i\in\{1,\ldots,r\}$. Then we say that $m$ is \emph{non-degenerate} if the $p$-adic matrix above has $2(n-r)$ different eigenvalues when $\Omega$ and the Hessian matrices are restricted to the coordinates $x_{r+1},\xi_{r+1},\ldots,x_n,\xi_n.$
\end{definition}

For each prime number $p$, let $c_0$ be the smallest quadratic non-residue modulo $p$. Following \cite[Definition 1.1]{CrePel-williamson}, we consider the sets $X_p$ in equation \eqref{eq:X},
\[Y_p=\begin{cases}
	\{c_0,p,c_0p\} & \text{if }p\equiv 1\mod 4; \\
	\{-1,p,-p\} & \text{if }p\equiv 3\mod 4; \\
	\{-1,2,-2,3,-3,6,-6\} & \text{if }p=2,
\end{cases}\]
and the functions $\mathcal{C}_i^k:Y_p\times(\Qp)^4\to\Qp$ and $\mathcal{D}_i^k:Y_p\times(\Qp)^4\to\Qp$, for $k\in\{1,2\}$, $i\in\{0,1,2\}$, by
\begin{align*}
	&\mathcal{C}_0^1(c,t_1,t_2,a,b)=\frac{ac}{2(c-b^2)},
	&&\mathcal{C}_1^1(c,t_1,t_2,a,b)=\frac{b}{b^2-c}, \\
	&\mathcal{C}_2^1(c,t_1,t_2,a,b)=\frac{1}{2a(c-b^2)},
	&&\mathcal{C}_0^2(c,t_1,t_2,a,b)=\frac{abc}{2(b^2-c)},\\
	&\mathcal{C}_1^2(c,t_1,t_2,a,b)=\frac{c}{c-b^2},
	&&\mathcal{C}_2^2(c,t_1,t_2,a,b)=\frac{b}{2a(b^2-c)},\\
	&\mathcal{D}_0^1(c,t_1,t_2,a,b)=-\frac{t_1+bt_2}{2a},
	&&\mathcal{D}_1^1(c,t_1,t_2,a,b)=-bt_1-ct_2,\\
	&\mathcal{D}_2^1(c,t_1,t_2,a,b)=-\frac{ac(t_1+bt_2)}{2},
	&&\mathcal{D}_0^2(c,t_1,t_2,a,b)=-\frac{bt_1+ct_2}{2a},\\
	&\mathcal{D}_1^2(c,t_1,t_2,a,b)=-c(t_1+bt_2),
	&&\mathcal{D}_2^2(c,t_1,t_2,a,b)=-\frac{ac(bt_1+ct_2)}{2}.
\end{align*}

\begin{theorem}[{\cite[first part of Theorem A]{CrePel-williamson}}]\label{thm:williamson}
	The local normal form of a $p$-adic analytic integrable system on a $4$-dimensional $p$-adic analytic symplectic manifold at a rank $0$ non-degenerate critical point is of the form $(g_1,g_2)$, where $g_1$ and $g_2$ can take one of the following three possible forms:
	\begin{enumerate}
		\item $g_1(x,\xi,y,\eta)=x^2+c_1\xi^2,g_2(x,\xi,y,\eta)=y^2+c_2\eta^2$, for some $c_1,c_2\in X_p$;
		\item $g_1(x,\xi,y,\eta)=x\eta+cy\xi,g_2(x,\xi,y,\eta)=x\xi+y\eta$, for some $c\in Y_p$;
		\item
		\begin{equation}\label{eq:class3}
			g_k(x,\xi,y,\eta)=\sum_{i=0}^{2}\mathcal{C}_i^k(c,t_1,t_2,a,b)x^iy^{2-i}+\sum_{i=0}^{2}\mathcal{D}_i^k(c,t_1,t_2,a,b)\xi^i\eta^{2-i},
		\end{equation}
		for $k\in\{1,2\}$, where $c$, $t_1$, $t_2$, $a$ and $b$ can take a fixed set of possible values (see \cite[Table 1]{CrePel-williamson} for the full specification).
	\end{enumerate}
	Here $(x,\xi,y,\eta)$ are $p$-adic local symplectic coordinates centered at the critical point of the $p$-adic analytic integrable system, that is, the symplectic form $\omega_0$ has the form $\dd x\wedge\dd\xi+\dd y\wedge\dd\eta$ in this coordinates.
\end{theorem}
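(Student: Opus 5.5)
The plan is to reduce the statement to a problem in linear symplectic algebra over $\Qp$ and then classify the resulting algebraic objects. There are four steps: linearize, describe the possible spectra, split into three cases according to the Galois orbits of the eigenvalues, and normalize within each case.

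\textbf{Step 1: linearization.} First I would use the $p$-adic Darboux theorem \cite[Theorem B]{CrePel-Darboux} to work in $((\Qp)^4,\dd x\wedge\dd\xi+\dd y\wedge\dd\eta)$ centered at the critical point $m$. Since $m$ has rank $0$ and is non-degenerate in the sense of Definition \ref{def:nondeg-integrable}, the Hessians $\dd^2 f_1(m),\dd^2 f_2(m)$ are commuting quadratic forms for the linearized Poisson bracket. Their Hamiltonian matrices $\Omega^{-1}\dd^2f_i(m)$ then span a commutative subalgebra of $\mathfrak{sp}(4,\Qp)$. This subalgebra contains a regular element, because a generic combination has four distinct eigenvalues. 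Hence it is a Cartan subalgebra.

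I would then invoke a $p$-adic analytic Eliasson-type normal form theorem, proved in \cite{CrePel-williamson} separately from the linear classification. It says that, up to a local symplectomorphism and composition with a local diffeomorphism of $(\Qp)^2$, the system agrees with its quadratic part. This reduces the statement to the following task: classify, up to linear symplectic conjugation and $\mathrm{GL}_2(\Qp)$ changes of basis, the pairs $(g_1,g_2)$ of Poisson-commuting quadratic forms spanning a Cartan subalgebra.

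\textbf{Step 2: spectral structure.} Let $A=\Omega^{-1}\dd^2(a_1f_1+a_2f_2)$ for generic $a_i$. Its eigenvalues are $\pm\lambda_1,\pm\lambda_2$ with $\lambda_i\in\overline{\Qp}$. The characteristic polynomial is even, $t^4+e_1t^2+e_2$, so the only data are the Galois action on $\{\lambda_1^2,\lambda_2^2\}$ and on the $\lambda_i$.

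\textbf{Step 3: case split.} There are three cases.
\begin{enumerate}
\item Both $\lambda_i^2\in\Qp$. Then $(\Qp)^4$ splits into a symplectic-orthogonal sum of two $A$-invariant symplectic planes, defined over $\Qp$, and each plane is treated as a corank-$1$ problem. On a plane, a quadratic form $\alpha x^2+2\beta x\xi+\gamma\xi^2$ is diagonalized by a symplectic change of coordinates. Scaling $x\mapsto tx$, $\xi\mapsto t^{-1}\xi$ and multiplying the function by a constant then reduces the coefficient ratio modulo $(\Q_p^*)^4$ and $(\Q_p^*)^2$ jointly. This produces exactly the finite set $X_p$ of case (1). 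The resulting counting is the Hardy–Ramanujan-flavoured part of the argument.
\item The $\lambda_i^2$ are conjugate in a quadratic extension $\Qp(\sqrt c)$ with $c\in Y_p$, and the eigenvectors pair up accordingly. Then the Cartan subalgebra is isomorphic to the Weil restriction of a split torus, and a suitable choice of basis gives the focus-focus-like form (2), $x\eta+cy\xi$, $x\xi+y\eta$.
\item All remaining Galois configurations, where the splitting field of the characteristic polynomial is larger (degree $4$ or $8$). Here I would choose a symplectic basis adapted to a $\Qp$-structure on the eigenlines, for instance of the form $v,\sigma v,\ldots$. Writing the two generators in that basis yields the formulas $\mathcal{C}_i^k,\mathcal{D}_i^k$ of \eqref{eq:class3}. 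The parameters $t_1,t_2,a,b$ encode the remaining freedom, namely the choice of eigenvector normalization, which matters modulo norms from the extension.
\end{enumerate}

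\textbf{Step 4: the main obstacle.} The hardest step is this last normalization in case (3). Eigenvectors are determined only up to scalars in the splitting field, and the symplectic pairing $v\tr\Omega\bar v$ of conjugate eigenvectors is a symplectic invariant that is well defined only modulo norms. This is the same phenomenon that the almost-eigenvector machinery of Theorem \ref{thm:integrable} exploits. I would first compute the norm groups $N_{K/\Qp}(K^*)$ for every quadratic and biquadratic $K$ using local class field theory (index $2$, respectively $4$, in $\Q_p^*$). Then I would enumerate the cosets to get the finite parameter table. Finally, I would check case by case that distinct table entries are not symplectically conjugate, by exhibiting the norm-class invariant.
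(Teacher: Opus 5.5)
This theorem is only quoted here from \cite{CrePel-williamson}, not reproved. Still, the ingredients the paper borrows from that source (\cite[Propositions 3.6, 4.1, 4.2]{CrePel-williamson2}) show that your outline is the right one: pass to the Hessians, split according to the Galois behaviour of $\pm\lambda_1,\pm\lambda_2$, and extract an invariant from the pairing $v\tr\Omega\bar v$ of eigenvectors modulo norms. Two of your steps, however, fail as set up.

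In case (1), the moves you allow on a plane are rescaling $x\mapsto tx$, $\xi\mapsto t^{-1}\xi$, swapping $x$ and $\xi$, and multiplying the function by a constant. These only identify $c$ with $ct^4$ and with $c^{-1}$, while ``modulo squares'' is too coarse. The true relation uses non-diagonal elements of $\mathrm{SL}_2(\Qp)$: $x^2+c\xi^2$ and $x^2+c'\xi^2$ are equivalent up to linear symplectic changes and scaling if and only if $c'=cs^2$ for some $s\in\DSq(\Qp,c)\setminus\{0\}$, i.e.\ $s$ is a norm from $\Qp(\sqrt{-c})$. For instance, let $p\equiv 1\bmod 4$ and let $\ii\in\Qp$ satisfy $\ii^2=-1$. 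Factoring $x^2+p^2\xi^2=(x+\ii p\xi)(x-\ii p\xi)$ gives linear symplectic coordinates in which $x^2+p^2\xi^2=p(x'^2+\xi'^2)$. So $c=p^2\notin X_p$ has normal form $c=1$, yet your moves never bring it into $X_p$. Conversely, $c_0$ and $c_0p^2$ are distinct elements of $X_p$ that agree modulo squares. Hence the norm-class invariant you reserve for case (3) is already needed here, both to reduce to $X_p$ and to separate its elements. It is exactly the condition $2t\lambda/(v\tr\Omega\bar v)\in\DSq(\Qp,-\lambda^2)$ used in the proof of Theorem \ref{thm:integrable1}.

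In case (3) you are using the wrong norm group. An automorphism $x\mapsto\bar x$ with $\bar\lambda=-\lambda$ fixes $K=\Qp(\lambda^2)=\Qp(\sqrt c)$. So for $v$ with entries in $L=K(\lambda)$, the number $k=v\tr\Omega\bar v$ satisfies $\bar k=-k$, i.e.\ $k\in\lambda K$. Replacing $v$ by $\mu v$ with $\mu\in L^*$ multiplies $k$ by $N_{L/K}(\mu)$. The invariant is therefore a class in $K^*/N_{L/K}(L^*)$, which has order $2$ by local class field theory over $K$, together with the square class $t_1+t_2\sqrt c$ of $\lambda^2$ in $K^*/(K^*)^2$. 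This is what the condition involving $\DSq(\Qp[\sqrt c],-t_1-t_2\sqrt c)$ in Proposition \ref{prop:integrable}(3) records. The absolute norm groups $N_{K/\Qp}(K^*)\subset\Qp^*$ of quadratic or biquadratic fields are not the right objects, so enumerating their cosets will not reproduce \cite[Table 1]{CrePel-williamson}. Indeed $\Qp(\lambda)$ need not be biquadratic: $\Qp(p^{1/4})$ with $p\equiv 3\bmod 4$ occurs and is not even Galois, and there the norm to $\Qp$ kills the class.

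The normalization you need instead is a symplectic basis adapted to a splitting $(\Qp)^4=\ell\oplus A\ell$, where $\ell$ is a line for the $K$-vector-space structure given by $A^2$. Every such line is Lagrangian, which is why \eqref{eq:class3} has no terms mixing $(x,y)$ with $(\xi,\eta)$.

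In case (2) you should also explain why no invariant survives. There $-\lambda$ is not a $\Qp$-conjugate of $\lambda$, so eigenvectors for $\lambda$ and $-\lambda$ can be rescaled independently over $\Qp(\sqrt c)$ and their pairing normalized to $1$.

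Finally, drop the analytic Eliasson-type theorem in Step 1. The paper uses this classification only up to $\ocal(\cdot)^3$ (see the proof of Theorem \ref{thm:integrable} and Section \ref{sec:examples}), so Darboux plus Hessians is the whole reduction. Also, a regular element only makes its own centralizer a Cartan subalgebra, so you need the two Hessians to be linearly independent for their span to be one.
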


\begin{remark}
	Integrable systems display features which can be encoded by algebraic, analytical of combinatorial invariants, and in some cases these invariants even ``classify'' the system (as it happens for example in the toric and semitoric cases \cite{Atiyah,Delzant,GuiSte,PPT,PelVuN-semitoric,PelVuN-construct}).
\end{remark}

\begin{remark}
	The classification of normal forms in dimensions $2$ and $4$ was solved in \cite{CrePel-williamson,CrePel-williamson2}; Theorem \ref{thm:williamson} above is part of it. However, in those papers we did not address the problem of finding the normal form of a \textit{given} critical point of a $p$-adic analytic integrable system; we just gave a list of normal forms and proved that every critical point was locally symplectomorphic to one and only one of these. The main results of the present paper (Theorems \ref{thm:integrable1}, \ref{thm:integrable} and Proposition \ref{prop:integrable}) give techniques to compute the normal forms in practice.
\end{remark}

\begin{figure}
	\begin{tikzpicture}
		\node at (0,0) {\includegraphics[trim=10cm 2cm 10cm 2cm,clip]{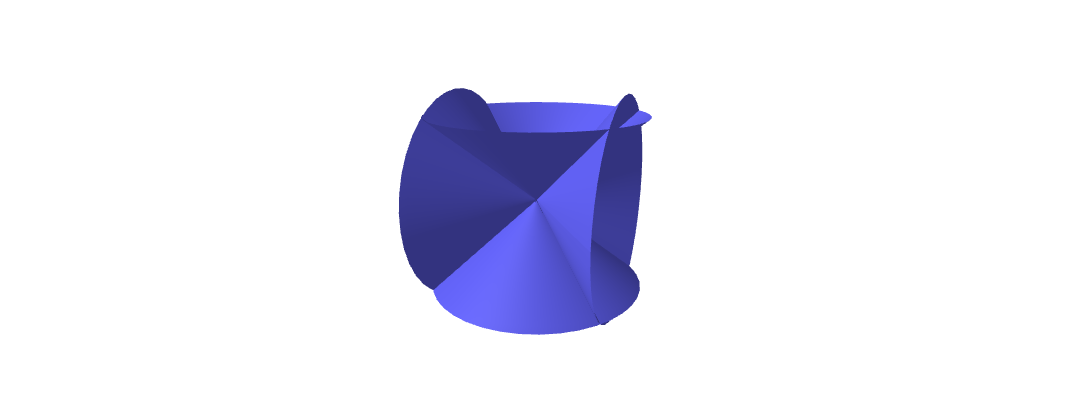}};
		\node at (0,4) {$x=\xi,y=\eta$};
		\node at (0,-5) {$x=-\xi,y=-\eta$};
		\node[right] at (4,0) {$x=\xi,y=-\eta$};
		\node[left] at (-5,0) {$x=-\xi,y=\eta$};
	\end{tikzpicture}
	\caption{Symbolic representation of $2$-dimensional fiber of the case $c=-1$ of point (i) of Theorem \ref{thm:integrable}, which is symplectically equivalent to $c=1$ if $p\equiv 1\mod 4$. The four ``cones'' are $2$-dimensional planes in $4$-dimensional space. See also Proposition \ref{prop:integrable}.}
	\label{fig:fibers1}
\end{figure}

\subsection{Proof of Theorem \ref{thm:integrable1}}

By Lemma \ref{lemma:aligned}, there exist aligned local symplectic coordinates $(x_1',\xi_1',\ldots,x_n',\xi_n')$ for the separated form $F'$ of $F$ at $m$. We take $x_i=x_i'$ and $\xi_i=\xi_i'$ for $i\ge 2$, so we are now essentially working in dimension $2$ and \cite[Theorem D]{CrePel-williamson} implies part (i). In particular, it implies that $c$ exists and it is unique.

We now prove part (ii). Let $H_1$ be the first two rows and columns of the Hessian of $f_1'$ in the aligned local symplectic coordinates $(x_1',\xi_1',\ldots,x_n',\xi_n')$:
\[H_1=\begin{pmatrix}
	\alpha & \beta \\
	\beta & \gamma
\end{pmatrix},\]
and let $\lambda$ and $-\lambda$ be the eigenvalues of $\Omega_0^{-1}H_1$. We have that $\lambda^2=\beta^2-\alpha\gamma$ and the eigenvector is
\[v=\begin{pmatrix}
	\lambda-\beta \\
	\alpha
\end{pmatrix}.\]
By \cite[Proposition 3.6]{CrePel-williamson2}, the singular component of the normal form (the rest are $\xi_i$ for each $i>1$) is $x_1^2+c\xi_1^2$ if and only if $\lambda^2=-ct^2$ for some $t\in \Qp$ and
\[\frac{2t\lambda}{v\tr\Omega_0 \bar{v}}\in\DSq(\Qp,-\lambda^2).\]
We have that
\begin{align*}
	v\tr\Omega_0\bar{v} & =\alpha(\lambda-\beta)-\alpha(-\lambda-\beta) \\
	& =2\alpha\lambda
\end{align*}
and
\begin{align*}
	\DSq(\Qp,-\lambda^2) & =\DSq(\Qp,ct^2) \\
	& =\DSq(\Qp,c),
\end{align*}
so the condition reduces to
\[\frac{t}{\alpha}\in\DSq(\Qp,c)\Longleftrightarrow \frac{t}{\alpha}=r^2+cs^2\]
for some $r,s\in\Qp$. This implies
\begin{align*}
	\alpha\gamma-\beta^2 & =-\lambda^2 \\
	& =ct^2 \\
	& =c\alpha^2(r^2+cs^2),
\end{align*}
as we wanted.

\subsection{Proof of Theorem \ref{thm:integrable}}

We start with the proof of part (i), which we divide in four steps.

\medskip
\textit{Step 1: reduce to aligned local symplectic coordinates and rank $0$.} Let $r$ be the rank of $m$. By Lemma \ref{lemma:aligned}, we can change $F$ to a separated form $F'$ at $m$ such that \[\dd f_1'(m)=\ldots=\dd f_{n-r}'(m)=0\] and choose aligned local symplectic coordinates $(x_1,\xi_1,\ldots,x_n,\xi_n)$ for $F'$ around $m$; from this point on $\omega=\omega_0$ and \[\dd f_i'(m)=\dd \xi_i\] for all $i$ with $n-r<i\le n$. This reduces the problem to working in dimension $2(n-r)$; we assume in the following that $r=0$.

\medskip
\textit{Step 2: equality of eigenvectors between matrices.} Again without loss of generality, we assume that $a_1=1$ and $a_i=0$ for all $i>1$. Let \[A_i=\Omega_0^{-1}\dd^2 f_i'(m)\] in the coordinates $(x_1,\xi_1,\ldots,x_n,\xi_n)$. Since the functions form an integrable system,
\[\{f_i',f_j'\}=0\]
for all $1\le i,j\le n$, that is,
\[(\dd f_i')^T\Omega_0^{-1}\dd f_j'=0.\]
Differentiating this twice and evaluating at $m$,
\[\dd^2 f_i'(m)\Omega_0^{-1}\dd^2 f_j'(m)=\dd^2 f_j'(m)\Omega_0^{-1}\dd^2 f_i'(m)\]
This implies that \[A_iA_j=A_jA_i\] for all $1\le i,j\le n$.

Let $v\in(\Cp)^4$ be an eigenvector of $A_1$ with eigenvalue $\lambda$. Then
\[A_1A_iv=A_iA_1v=\lambda A_i v\]
for $1\le i\le n$. This implies that $A_iv$ is also an eigenvector of $A_1$ with value $\lambda$. But the critical point is non-degenerate, which means that the only eigenvector with value $\lambda$ is $v$. Hence, $A_iu=\mu v$ for some $\mu\in\Cp$, and $v$ is also an eigenvector of $A_i$. The same happens with $\lambda$ replaced by $-\lambda$; now the eigenvector is $\bar{v}$.

\medskip
\textit{Step 3: find a normal form for a matrix.} The next step is to follow part of the proof of \cite[Theorem 5.11]{CrePel-williamson2} but generalizing it to higher dimensions. We write $\lambda$ in the form $a\sqrt{-c}$ for $a\in\Qp$ and $c\in X_p$. There is always at least one $c$ for which this is possible, and there may be two. Now we apply \cite[Corollary 2.13]{CrePel-williamson2} to the matrix $\dd^2 f_1$, resulting in a symplectic matrix $S\in\M_{2n}(\Cp)$ such that
\[S^T\dd^2f_1S=\begin{pmatrix}
	a & 0 & 0 \\
	0 & ca & 0 \\
	0 & 0 & N
\end{pmatrix}\]
for some matrix $N\in\M_{2n-2}(\Cp)$. The matrix $S$ must have the form $\Psi_1D\Psi_2^{-1}$ where $D$ is a diagonal matrix, the first two columns of $\Psi_1$ are $v$ and $\bar{v}$, and
\[\Psi_2=\begin{pmatrix}
	\lambda & -\lambda & 0 \\
	a & a & 0 \\
	0 & 0 & \Psi'_2
\end{pmatrix}\]
where $\Psi'_2$ is some matrix that depends on $N$. If $d_1$ and $d_2$ are the first two entries of the diagonal of $D$ and $S_1$ is the matrix formed by the first two columns of $S$, we have that
\[S_1=\begin{pmatrix}
	v & \bar{v}
\end{pmatrix}
\begin{pmatrix}
	d_1 & 0 \\
	0 & d_2
\end{pmatrix}
\begin{pmatrix}
	\lambda & -\lambda \\
	a & a
\end{pmatrix}^{-1}.\]

Now recall \cite[Definition 3.4]{CrePel-williamson2} that if $F$ is a field and $c\in F$, then \[\DSq(F,c)=\{x^2+cy^2:x,y\in F\}.\] We will use this notation next.

By \cite[Proposition 3.6]{CrePel-williamson2}, $S$ has entries in $\Qp$ if and only if
\[\frac{2a\lambda}{v^T\Omega_0\bar{v}}\in\DSq(\Qp,-\lambda^2).\]
By \cite[Proposition 3.10]{CrePel-williamson2}, there is one and only one $c\in X_p$ for which this happens. If $S$ has entries in $\Qp$, it determines a linear symplectic coordinate change which makes $f_1$ of the form \[a(x_1^2+c\xi_1^2)+\ocal(x_1,\xi_1)^3+g_1(x_2,\xi_2,\ldots,x_n,\xi_n),\]
where the Hessian of $g_1$ is $N$.

\medskip
\textit{Step 4: find a normal form for the integrable system.} Since $S$ only depends on the eigenvectors of $A_1$, and the rest of $A_i$ have the same eigenvectors, they will have the same matrix $S$ and the same coordinate change will make $f_i$ of the form
\[a_i(x_1^2+c\xi_1^2)+\ocal(x_1,\xi_1)^3+g_i(x_2,\xi_2,\ldots,x_n,\xi_n)\]
in such a way that the Hessians of the functions $g_1,\ldots,g_n$ span a linear space of dimension $n-1$. (This happens because the matrix $N$ depends linearly on $n-1$ eigenvalues of $A_1$. Varying these eigenvalues will only change $N$ inside an $(n-1)$-dimensional linear space.)

In this situation, we can find a linear combination of the $f_i$ which equals
\[x_1^2+c\xi_1^2+\ocal(x_1,\xi_1,\ldots,x_n,\xi_n)^3,\]
and the proof of part (i) is complete.

\medskip
We now prove part (ii). We have that $c$ is the only element of $X_p$ such that $\lambda^2=-ca^2$ for some $a\in\Qp$ and
\begin{align*}
	\frac{2a\lambda}{v\tr\Omega \bar{v}}\in\DSq(\Qp,-\lambda^2) & =\DSq(\Qp,ca^2) \\
	& =\DSq(\Qp,c).
\end{align*}
We must show that this is exactly when $p_{c,\lambda,k}$ has a zero.

Applying Theorem \ref{thm:eigen} to the matrix $A$ and the subset $I$ in Definition \ref{def:almost}, we get that
\begin{align*}
	\|v-v_0\|_p & \le \frac{1}{\Ind_p(\lambda \mathrm{I}_\ell-A)_{I\cdot}}\max_{i \in I}\frac{|\lambda v_i-A_{i\cdot}v_0|_p}{\|\lambda \mathrm{e}_i-A_{i\cdot}\|_p} \\
	& =h(v_0)\|v_0\|_p.
\end{align*}
Now
\begin{align*}
	|v\tr\Omega \bar{v}-k|_p & =|v\tr\Omega \bar{v}-v_0\tr\Omega\bar{v}_0|_p \\
	& =|v\tr\Omega \bar{v}-v\tr\Omega \bar{v}_0+v\tr\Omega \bar{v}_0-v_0\tr\Omega\bar{v}_0|_p \\
	& =|v\tr\Omega(\bar{v}-\bar{v}_0)+(v-v_0)\tr\Omega\bar{v}_0|_p \\
	& =|(v-v_0)\tr\Omega(\bar{v}-\bar{v}_0)+v_0\tr\Omega(\bar{v}-\bar{v}_0)+(v-v_0)\tr\Omega\bar{v}_0|_p \\
	& \le \max\Big\{|(v-v_0)\tr\Omega(\bar{v}-\bar{v}_0)|_p,|v_0\tr\Omega(\bar{v}-\bar{v}_0)|_p,|(v-v_0)\tr\Omega\bar{v}_0|_p\Big\} \\
	& \le \max\Big\{k\|v-v_0\|_p^2,k\|v_0\|_p\|v-v_0\|_p\Big\} \\
	& \le \max\Big\{kh(v_0)^2\|v_0\|_p^2,kh(v_0)\|v_0\|_p^2\Big\} \\
	& =kh(v_0)\|v_0\|_p^2 \\
	& \le \frac{|v_0\tr\Omega \bar{v}_0|_p}{q}.
\end{align*}

This means that $v\tr\Omega \bar{v}$ and $k$ coincide in the order and one leading digit if $p\ne 2$, or in the order and three leading digits if $p=2$. Then their quotient is a square in $\Qp$ and
\[\frac{2a\lambda}{v\tr\Omega \bar{v}}\in\DSq(\Qp,c)\iff\frac{2a\lambda}{k}\in\DSq(\Qp,c).\]
This in turn is equivalent to the existence of $r,s\in\Qp$ such that
\[\frac{2a\lambda}{k}=r^2+cs^2\iff a=\frac{k(r^2+cs^2)}{2\lambda}.\]
Hence the $c\in X_p$ which we want is the one for which exist $r,s\in\Qp$ such that
\[\lambda^2=-ca^2=\frac{-ck^2(r^2+cs^2)}{4\lambda^2},\]
that is, $p_{c,\lambda,k}(r,s)=0$, as we wanted.

\subsection{Computing normal forms in dimension $4$}

We know the complete Weierstrass-Williamson classification of $p$-adic critical points in dimension $4$. We can give in this case a result that, similarly to Theorem \ref{thm:integrable}, allows us to compute normal forms of critical points using only an approximation to the eigenvectors, but, unlike Theorem \ref{thm:integrable}, includes all possible types of rank $0$ critical points. We only need rank $0$ because, in dimension $4$, critical points can only have rank $0$ or $1$, and the case of rank $1$ is solved in Theorem \ref{thm:integrable1}. The cases in Proposition \ref{prop:integrable} correspond to the cases of Theorem \ref{thm:williamson}; some of them can be fully classified without the eigenvectors.

\begin{proposition}\label{prop:integrable}
	\letpprime. Let $(M,\omega)$ be a $p$-adic analytic symplectic manifold of dimension $4$. Let \[F=(f_1,f_2):(M,\omega)\to(\Qp)^2\] be a $p$-adic analytic integrable system. Let $m$ be a rank $0$ non-degenerate critical point of $F$ and let $g$ be a linear combination of $f_1$ and $f_2$ such that $m$ is a non-degenerate critical point of $g$ in the symplectic sense. Let $\Omega$ be the matrix of $\omega_m$ and $A=\Omega^{-1}\dd^2 g$. Let $\lambda$ be an eigenvalue of $A$. Then there exist an open neigborhood $U$ of $M$ and coordinates $(x,\xi,y,\eta)$ on $U$ centered at $M$ such that $\omega=\dd x\wedge\dd\xi+\dd y\wedge\dd\eta$ and one of the following cases holds.
	\begin{enumerate}
		\item[(1a)] Suppose that $\lambda\in\Qp$. Then one of the two components of $F$ at $m$ has the form $(x,\xi,y,\eta)\mapsto x^2+c\xi^2$ where $c=1$ if $p\equiv 1\mod 4$ and $c=-1$ otherwise.
		\item[(1b)] Suppose that $\lambda\notin \Qp$, $\lambda^2\in \Qp$ and there is $v_0\in(\Cp)^4$ which is an $(A,\lambda,\Omega)$-almost eigenvector according to Definition \ref{def:almost} with $\ell=4$. Let $c\in X_p$. Let $x\mapsto\bar{x}$ be an automorphism of $\Cp$ that fixes $\Qp$ and sends $\lambda$ to $-\lambda$. Let $c\in X_p$. One of the two components of $F$ at $m$ has the form $(x,\xi,y,\eta)\mapsto x^2+c\xi^2$ if and only if $\lambda^2=-ca^2$ for some $a\in \Qp$ and
		\[\frac{2a\lambda}{v_0\tr\Omega \bar{v}_0}\in\DSq(\Qp,c).\]
		\item[(2)] Suppose that $\lambda^2\notin\Qp$ and $\lambda\in\Qp[\lambda^2]$. Let $c\in Y_p$. $F$ has the form $(x,\xi,y,\eta)\mapsto(x\eta+cy\xi,x\xi+y\eta)$ if and only if $\lambda^2\in\Qp[\sqrt{c}]$.
		\item[(3)] Suppose that $\lambda^2\notin\Qp$, $\lambda\notin\Qp[\lambda^2]$ and there is $v_0\in(\Cp)^4$ which is an $(A,\lambda,\Omega)$-almost eigenvector according to Definition \ref{def:almost}. Let $x\mapsto\bar{x}$ be an automorphism of $\Cp$ that fixes $\Qp$ and sends $\lambda$ to $-\lambda$. $F$ has the form \eqref{eq:class3} with parameters $(c,t_1,t_2,a,b)$ if and only if $\lambda^2\in\Qp[\sqrt{c}]$, $\lambda^2/(t_1+t_2\sqrt{c})$ is a square in $\Qp[\sqrt{c}]$ and
		\[\frac{a\sqrt{c}(b+\sqrt{c})\sqrt{t_1+t_2\sqrt{c}}}{v_0\tr\Omega \bar{v}_0}\in\DSq(\Qp[\sqrt{c}],-t_1-t_2\sqrt{c}).\]
	\end{enumerate}
\end{proposition}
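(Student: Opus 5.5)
We prove Proposition \ref{prop:integrable} case by case, following the trichotomy of Theorem \ref{thm:williamson}. First we reduce to a single matrix. Exactly as in Step 2 of the proof of Theorem \ref{thm:integrable}, the matrices $A_i=\Omega^{-1}\dd^2 f_i(m)$ commute. Since $A$ has four distinct eigenvalues, every eigenvector of $A$ is an eigenvector of each $A_i$. By \cite[Propositions 3.6, 4.2]{CrePel-williamson2}, the normal form at $m$ is therefore determined by the eigenvalues of $A$ together with the symplectic pairings $v\tr\Omega\bar v$ of the corresponding eigenvectors. The spectrum of $A$ is symmetric, $\{\pm\lambda,\pm\mu\}$, and is stable under $\mathrm{Gal}(\overline{\Qp}/\Qp)$. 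The three cases of Theorem \ref{thm:williamson} correspond to the three possible Galois behaviours of $\lambda$:
\begin{enumerate}
\item $\lambda^2\in\Qp$ gives a block $x^2+c\xi^2$;
\item $\lambda^2\notin\Qp$ with $\lambda\in\Qp[\lambda^2]$ gives the focus-focus-type form;
\item $\lambda^2\notin\Qp$ with $\lambda\notin\Qp[\lambda^2]$ gives form \eqref{eq:class3}.
\end{enumerate}

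For (1a), $\lambda\in\Qp$ means the $2$-dimensional block is hyperbolic over $\Qp$. Its normal form is $x\xi$, which is linearly symplectically equivalent to $x^2-\xi^2$. When $p\equiv1\bmod 4$ we have $-1$ a square, so $x^2-\xi^2$ is equivalent to $x^2+\xi^2$. This gives $c=-1$ or $c=1$ as stated.

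For (1b), we argue as in part (ii) of Theorem \ref{thm:integrable}. Let $v$ be the true eigenvector of $A$ for $\lambda$. Theorem \ref{thm:eigen} gives $\|v-v_0\|_p\le h(v_0)\|v_0\|_p$. The same ultrametric estimate then shows
\[
|v\tr\Omega\bar v-v_0\tr\Omega\bar v_0|_p\le |v_0\tr\Omega\bar v_0|_p/q .
\]
Hence the quotient $v\tr\Omega\bar v/v_0\tr\Omega\bar v_0$ is a square in $\Qp$. Combined with the criterion of \cite[Proposition 3.6]{CrePel-williamson2}, this gives the stated condition with $v_0$ in place of $v$.

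For (2), the eigenvalues $\pm\lambda,\pm\mu$ are Galois conjugate and $\lambda$ already generates a quadratic extension $\Qp[\lambda^2]$. Since $\lambda\in\Qp[\lambda^2]$, the eigenvalues come in the pattern of the form $(x\eta+cy\xi,x\xi+y\eta)$. We compute the spectrum of $\Omega_0^{-1}\dd^2$ of this model and find eigenvalues $\pm\sqrt{c}$-type quantities lying in $\Qp[\sqrt c]$. Because the quadratic extensions of $\Qp$ are parametrized exactly by $Y_p$, we get the criterion $\lambda^2\in\Qp[\sqrt c]$. We expect no eigenvector information to be needed here, since the classification of \cite{CrePel-williamson} has a single form per $c$ in this case.

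For (3), $\lambda$ generates a degree-$4$ extension $\Qp[\lambda]$ that contains the quadratic field $K=\Qp[\sqrt c]=\Qp[\lambda^2]$. We follow \cite[Theorem 5.11]{CrePel-williamson2} with $K$ in place of $\Qp$. The conditions that $\lambda^2\in K$ and that $\lambda^2/(t_1+t_2\sqrt c)$ is a square in $K$ identify $(c,t_1,t_2)$. The remaining parameters $(a,b)$ are fixed by a $\DSq(K,\cdot)$ condition on the pairing $v\tr\Omega\bar v$, where the bar is the automorphism sending $\lambda\mapsto-\lambda$ (it fixes $K$). As in (1b), we replace $v$ by $v_0$ using Theorem \ref{thm:eigen}. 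The closeness bound shows the ratio $v\tr\Omega\bar v/v_0\tr\Omega\bar v_0$ is congruent to $1$ to the required precision, hence a square. We need this in $K$; being a square in $\Qp$ would suffice, and the bound with $q$ gives it at least over $\Qp$-leading digits.

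The main obstacle is this last step in case (3). Here the pairing lies in $K$, not $\Qp$, so we must check that the almost-eigenvector bound, with constant $q$ designed for $\Qp$, still forces a square in a possibly ramified $K$. We would handle it by noting that the ratio is a $1$-unit congruent to $1$ modulo $q$ and applying Hensel's lemma in $\ocal_K$. In the ramified $p=2$ case this may require a stronger bound, which would have to be checked against the definition of $q$.
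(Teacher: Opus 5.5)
Your proposal follows the paper's proof. The paper cites \cite[Propositions 3.1 and 4.1]{CrePel-williamson2} for (1a) and (2), treats (1b) as part (ii) of Theorem \ref{thm:integrable} in dimension $4$, and for (3) applies \cite[Proposition 4.2]{CrePel-williamson2} over $K=\Qp[\sqrt{c}]$ and then swaps $v$ for $v_0$ using the same estimate as in Theorem \ref{thm:integrable}(ii). The obstacle you flag at the end is not real: Hensel's lemma for $X^2-u$ at $X=1$ only requires $|u-1|_p<|4|_p$ (absolute value extended from $\Qp$) in any finite extension, regardless of ramification, and the estimate gives $|u-1|_p\le 1/q$, i.e.\ $\le 1/8<1/4$ for $p=2$ and $\le 1/p<1$ for odd $p$, so no stronger bound is needed.
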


\begin{proof}
	Parts (1a) and (2) follow from \cite[Propositions 3.1 and 4.1]{CrePel-williamson2} respectively. Part (1b) is the particular case of Theorem \ref{thm:integrable} when the dimension is $4$.
	
	We finish with part (3). We apply \cite[Proposition 4.2]{CrePel-williamson2} to the matrix $\dd^2 g$, with the parameters $(c,t_1,t_2,a,b)$. We obtain that the normal form occurs if and only if
	\[\frac{a\sqrt{c}(b+\sqrt{c})\sqrt{t_1+t_2\sqrt{c}}}{v\tr\Omega \bar{v}}\in\DSq(\Qp[\sqrt{c}],-t_1-t_2\sqrt{c}).\]
	What is left to prove is that $v\tr\Omega \bar{v}$ is in $\DSq(\Qp[\sqrt{c}],-t_1-t_2\sqrt{c})$ if and only if $v_0\tr\Omega\bar{v}_0$ is there. Given $x,y\in\Qp[\sqrt{c}]$, whether $x\in\DSq(\Qp[\sqrt{c}],y)$ only depends on the class of $x$ and $y$ modulo squares in $\Qp[\sqrt{c}]$, which in turn, for a fixed $y$, only depends on the order and one leading digit of $x$, if $p\ne 2$, or the order and three leading digits of $x$, if $p=2$. Hence, what we want to prove is that $v\tr\Omega \bar{v}$ and $v_0\tr\Omega\bar{v}_0$ coincide in the order and that number of leading digits (here ``leading digits'' should be understood as having the form $r+s\sqrt{c}$ for $r,s\in\{0,\ldots,p-1\}$). From this point on the argument is the same as in part (ii) of Theorem \ref{thm:integrable}.
\end{proof}

\begin{remark}
	$p$-adic integrable systems belong to the emerging field of $p$-adic symplectic geometry, which has been studied by the authors in previous papers \cite{CrePel-JC,CrePel-nonsqueezing,CrePel-williamson,CrePel-williamson2,CrePel-Darboux,CrePel-angular}, see also \cite{HuHu,Zelenov} for some symplectic constructions over the $p$-adic field.
\end{remark}

\section{Example and intuition behind almost eigenvectors: Definition \ref{def:almost}}\label{sec:almost}

\begin{figure}
	\includegraphics{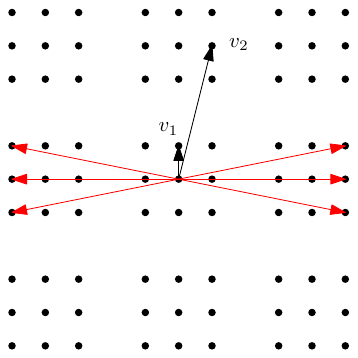}
	\caption{The intuition behind the idea of ``independence number''. The points represent balls of radius $1$. The two vectors $v_1,v_2$ are linearly independent, yet there are relatively long vectors, such as the ones marked in red, which are approximately orthogonal to both $v_1$ and $v_2$. This is reflected on $v_1$ and $v_2$ having low independence number ($1/3$ in this case).}
	\label{fig:vectors-padic}
\end{figure}

\subsection{Intuition behind almost eigenvectors}

The definition of almost eigenvector in Definition \ref{def:almost} may seem quite technical, but it is in fact rather natural. We can think of an almost eigenvector as a vector which is so close to an eigenvector that the result of the formulas which give the type of the critical point will be the same if we use it instead of the true eigenvector; the closeness we need between these vectors is given by the upper bound on $h(v_0)$. In order to measure how close are actually those vectors, we need the concept of independence number of a matrix (Definition \ref{def:indep}): if the rows of $\lambda\mathrm{I}_\ell-A$ are near-dependent, the maximum that appears in $h(v_0)$ can be small even when $v_0$ is not near an eigenvector.

Finally, when we say that $q=8$ if $p=2$ and otherwise $q=p$, this may seem artificial, but in fact is justified by the $p$-adic Weierstrass-Williamson classification, which has more classes for $p=2$ then for other primes; for this reason we need to be closer to the true eigenvector if $p=2$ in order to avoid falling into a wrong class.

\subsection{Computation of almost eigenvectors: an example}

Consider $p=2$ and the matrix
\[A=\Omega_0^{-1}\dd^2g=\begin{pmatrix}
	0 & 2 & 0 & 32 \\
	-2 & 0 & -32 & 0 \\
	0 & 32 & 0 & 4 \\
	-32 & 0 & -4 & 0
\end{pmatrix}\]
The eigenvalues of $A$ are
\[\ii(3+\sqrt{1025}),\ii(3-\sqrt{1025}),-\ii(3+\sqrt{1025}),-\ii(3-\sqrt{1025}).\]
Let $\lambda=\ii(3+\sqrt{1025})$. Note that $\lambda\notin\Q_2$ but $\lambda^2=-(3+\sqrt{1025})^2\in\Q_2$. We want to find an $(A,\lambda,\Omega_0)$-almost eigenvector (we take the matrix of the symplectic form as standard). Note that $1025$ is $2$-adically very close to $1$, and
\[\sqrt{1025}-1=\frac{1025-1}{\sqrt{1025}+1}\approx\frac{1024}{2}=512,\]
that is, $\sqrt{1025}-1$ has order $9$ and $\lambda$ is approximately $4\ii$.

We take the matrix
\[\lambda \mathrm{I}_4-A=\begin{pmatrix}
	\lambda & -2 & 0 & -32 \\
	2 & \lambda & 32 & 0 \\
	0 & -32 & \lambda & -4 \\
	32 & 0 & 4 & \lambda
\end{pmatrix},\]
the vector $v_0=(0,0,1,\ii)$ and $I=\{1,2,3\}$. The norms of the rows of $C$ are $1/2$, $1/2$ and $1/4$ (this one coming from the $\lambda$ and the $-4$), and the determinant of the rows of $C$ and first three columns has the same absolute value as $2\cdot 2\cdot\lambda$, that is, $1/16$ (the rest of products have smaller absolute value), hence the independence number of $C$ is $1$.

Now we apply the formula for $h(v_0)$:
\begin{align*}
	h(v_0) & =h(0,0,1,\ii) \\
	& =\max\left\{\frac{|{-32\ii}|_2}{1/2},\frac{|32|_2}{1/2},\frac{|\lambda-4\ii|_2}{1/4}\right\} \\
	& =\max\left\{\frac{1}{16},\frac{1}{16},4|\ii(\sqrt{1025}-1)|_2\right\} \\
	& =\max\left\{\frac{1}{16},\frac{1}{16},4\cdot 2^{-9}\right\} \\
	& =\frac{1}{16}.
\end{align*}

We have to check that
\[h(v_0)=\frac{1}{16}\le\min\left\{1,\frac{|v_0\tr\Omega \bar{v}_0|_2}{8\|v_0\|_2^2}\right\}\]
for some automorphism $x\mapsto \bar{x}$ of $\C_2$ which sends $\lambda$ to $-\lambda$. Since $\sqrt{1025}$ is in $\Q_2$, this must send $\ii$ to $-\ii$, and the inequality becomes
\[\frac{1}{16}\le\min\left\{1,\frac{|{-2\ii}|_2}{8\cdot 1^2}\right\}=\frac{1}{16}.\]
Hence we have an almost eigenvector for our matrix.

The other eigenvalue is $\mu=\ii(3-\sqrt{1025})$, which is approximately $2\ii$. Now we have that $\mu\notin\Q_2$ and $\mu^2\in\Q_2$, and
\[\mu \mathrm{I}_4-A=\begin{pmatrix}
	\mu & -2 & 0 & -32 \\
	2 & \mu & 32 & 0 \\
	0 & -32 & \mu & -4 \\
	32 & 0 & 4 & \mu
\end{pmatrix},\]
the vector $v_0=(1,\ii,0,0)$ and $I=\{2,3,4\}$. The norms of all those rows are $1/2$, and the determinant of the last three columns is $1/8$, hence the independence number is $1$.

\begin{align*}
	h(v_0) & =h(1,\ii,0,0) \\
	& =\max\left\{\frac{|2+\ii\mu|_2}{1/2},\frac{|{-32\ii}|_2}{1/2},\frac{|32|_2}{1/2}\right\} \\
	& =\max\left\{2|{-1+\sqrt{1025}}|_2,\frac{1}{16},\frac{1}{16}\right\} \\
	& =\max\left\{2\cdot 2^{-9},\frac{1}{16},\frac{1}{16}\right\} \\
	& =\frac{1}{16}.
\end{align*}

We have to check that
\begin{align*}
	h(v_0)=\frac{1}{16} & \le\min\left\{1,\frac{|v_0\tr\Omega \bar{v}_0|_2}{8\|v_0\|_2^2}\right\} \\
	& =\min\left\{1,\frac{|{-2\ii}|_2}{8\cdot 1^2}\right\}=\frac{1}{16}.
\end{align*}
Again, this is an almost eigenvector.

\section{Examples of applications of Theorem \ref{thm:eigen}, Lemma \ref{lemma:main} and Theorem \ref{thm:integrable}}\label{sec:examples}

The main results of this paper may appear difficult to apply because their statements are involved, but they are in fact very useful and applicable in concrete cases as we show in the current section and in the upcoming section.

\begin{example}[Application of Lemma \ref{lemma:main}]
	Consider now $p=3$ and the matrix $A'$ formed by the integers from $1$ to $36$ in succession, that is,
	\[a_{ij}=6(i-1)+j\]
	for $i,j\in\{1,2,3,4,5,6\}$. This matrix has rank $r=2$. The determinant of the size $2$ submatrix with indices $i,i'$ for the rows and $j,j'$ for the columns is
	\[\Big(6(i-1)+j\Big)\Big(6(i'-1)+j'\Big)-\Big(6(i-1)+j'\Big)\Big(6(i'-1)+j\Big)=-6(i'-i)(j'-j),\]
	and all rows have norm $1$, hence all the $2$-by-$6$ submatrices of $A'$ have independence number at most $1/3$. If we take $v_0=(1,1,2,0,0,2)$, then \[|a_iv_0|_3=\frac{1}{3}\] for all $i\in\{1,2,3,4,5,6\}$, which means that the maximum in Lemma \ref{lemma:main} is $1/3$ and the bound for $\|v-v_0\|_p$ is at least $1$, independently of our choice of $I$. In this case the bound is not exact, because there is a vector in the kernel at $3$-adic distance $1/3$ of $v_0$, namely $(1,-2,2,-3,3,-1)$.
\end{example}

\begin{example}[Application of Theorem \ref{thm:eigen}]
	We consider now $p=3$ and the matrix
	\[B=\begin{pmatrix}
		0 & 3 & 0 & 0 \\
		0 & 0 & 3 & 0 \\
		0 & 0 & 0 & 3 \\
		\frac{1}{27} & 0 & 0 & 0
	\end{pmatrix}\]
	which has $1$ as an eigenvalue.
	\[\mathrm{I}_4-B=\begin{pmatrix}
		1 & -3 & 0 & 0 \\
		0 & 1 & -3 & 0 \\
		0 & 0 & 1 & -3 \\
		-\frac{1}{27} & 0 & 0 & 1
	\end{pmatrix}.\]
	We take $I=\{2,3,4\}$. This gives a matrix with independence number $1$, because the determinant of the last three rows and first three columns has $3$-adic absolute value $27$, which is exactly the product of the norms of the rows.
	
	Let $v_0=(0,0,3,1)$. Applying the formula, we have that
	\[\|v-v_0\|_3\le\max\left\{\frac{|{-9}|_3}{1},\frac{|0|_3}{1},\frac{|1|_3}{27}\right\}=\frac{1}{9}.\]
	This is exact, because the eigenvector is $(27,9,3,1)$ which is at distance $1/9$.
\end{example}

\begin{example}[Application of Theorem \ref{thm:integrable}]
	Consider $p=2$ and
	\[g(x,\xi,y,\eta)=x^2+\xi^2+2y^2+2\eta^2+32xy+32\xi\eta\]
	where $(x,\xi,y,\eta)$ are symplectic coordinates in $(\Qp)^4$. We want to classify the critical point at the origin.
	
	We have
	\[A=\Omega_0^{-1}\dd^2g=\begin{pmatrix}
		0 & 2 & 0 & 32 \\
		-2 & 0 & -32 & 0 \\
		0 & 32 & 0 & 4 \\
		-32 & 0 & -4 & 0
	\end{pmatrix},\]
	which is the matrix of Section \ref{sec:almost}. Recall that the eigenvalues of $A$ are
	\[\ii(3+\sqrt{1025}),\ii(3-\sqrt{1025}),-\ii(3+\sqrt{1025}),-\ii(3-\sqrt{1025}).\]
	As in Section \ref{sec:almost}, an almost eigenvector for $\lambda=\ii(3+\sqrt{1025})$ is $v_0=(0,0,1,\ii)$. We can now use it to classify the critical point: in order to apply \ref{thm:integrable}, we start by finding $c$ such that the $p_{c,\lambda,k}$ has a zero. Here $k=v_0^T\Omega_0\bar{v}_0=-2\ii$. We want that
	\[-4c(r^2+cs^2)^2+4(3+\sqrt{1025})^4=0,\]
	that is,
	\[c(r^2+cs^2)^2=(3+\sqrt{1025})^4\]
	which has a solution with $c=1$ (which is an element of $X_2$), $r=3+\sqrt{1025}$ and $s=0$.
	
	This finishes the classification of the pair of eigenvalues $(\lambda,-\lambda)$, which give one of the components of the integrable system. In order to find the other component, we take $\mu=\ii(3-\sqrt{1025})$. As in Section \ref{sec:almost}, an almost eigenvector is $v_0=(1,\ii,0,0)$. The equation $p_{c,\lambda,k}$, for $k=-2\ii$, implies that
	\[-4c(r^2+cs^2)^2+4(3-\sqrt{1025})^4=0,\]
	which is the same as before, but with $3-\sqrt{1025}$ instead of $3+\sqrt{1025}$, and gives $c=1$ again.
	
	To summarize, the normal form of a $p$-adic analytic integrable system with $g$ as a component will have the normal form around the origin
	\[(x'^2+\xi'^2,y'^2+\eta'^2)+\ocal(x',\xi',y',\eta')^3.\]
\end{example}

\section{Application of Theorem \ref{thm:integrable} to study the coupled angular momentum}\label{sec:angmom}

	We can also use Theorem \ref{thm:integrable} to classify the rank $0$ critical points of the $p$-adic coupled angular momentum system. The system depends on three parameters $t,R_1,R_2$, with $t\in\Zp$ and $R_1,R_2\in\Qp$ with $|R_1|_p<|R_2|_p$. It is defined in $\sphere\times\sphere$ endowed with a symplectic form $R_1\omega_1+R_2\omega_2$, where $\omega_1$ and $\omega_2$ are the area forms of the two $p$-adic spheres. It is the system $(J,H):\sphere\times\sphere\to(\Qp)^2$ where
	\[\left\{
	\begin{aligned}
		J(x_1,y_1,z_1,x_2,y_2,z_2) & =R_1z_1+R_2z_2; \\
		H(x_1,y_1,z_1,x_2,y_2,z_2) & =(1-t)z_1+t(x_1y_1+x_2y_2+z_1z_2),
	\end{aligned}
	\right.\]
	where $x_1,y_1,z_1,x_2,y_2,z_2$ are coordinates in $\sphere\times\sphere$. The system has four rank $0$ critical points given by $(0,0,z_1,0,0,z_2)$, for $z_1\in\{1,-1\}$ and $z_2\in\{1,-1\}$ (see \cite[Definition 1.1]{CrePel-angular}).
	
	\begin{figure}
		\includegraphics[width=\linewidth,trim=3cm 3cm 3cm 3cm]{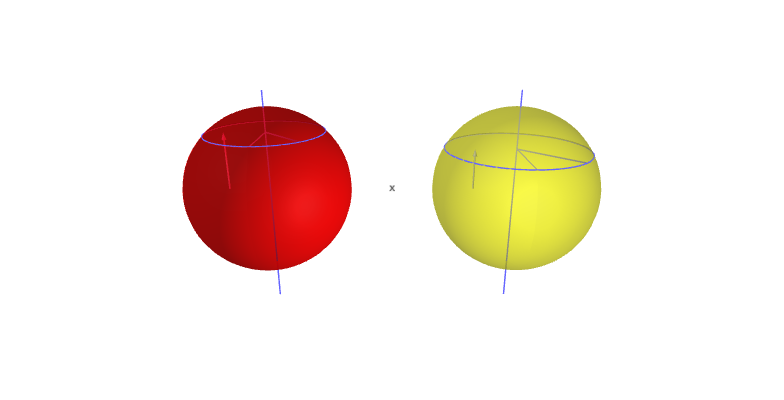}
		\caption{The coupled angular momentum system is obtained by coupling two spin systems in a nontrivial way.}
	\end{figure}
	
	The matrix $A=\Omega^{-1}\dd^2 H$ at the critical point is given by
	\[A=\frac{1}{R_2}\begin{pmatrix}
		0 & -k(t-tz_2-1) & 0 & -ktz_1 \\
		k(t-tz_2-1) & 0 & ktz_1 & 0 \\
		0 & -tz_2 & 0 & tz_1 \\
		tz_2 & 0 & -tz_1 & 0
	\end{pmatrix},\]
	where $k=R_2/R_1$. Suppose for example that $R_1=1/k$, $R_2=1$ and $z_1=z_2=1$ (for the other critical points it is possible to make a similar argument). Now
	\[A=\begin{pmatrix}
		0 & k & 0 & -kt \\
		-k & 0 & kt & 0 \\
		0 & -t & 0 & t \\
		t & 0 & -t & 0
	\end{pmatrix}.\]
	In order to apply Theorem \ref{thm:integrable}, we first compute the eigenvalues of $A$: by \cite[Lemma 2.6]{CrePel-angular}, they are $\{\lambda,-\lambda,\mu,-\mu\}$, where $\lambda$ and $\mu$ are the roots of
	\[x^2+\ii(k+t)x+kt(t-1).\]
	
	By \cite[Lemma 3.1]{CrePel-angular}, $\lambda$ is approximately $-\ii k$ (in the sense of having the same order and leading digit), and $\mu$ is approximately $\ii t(t-1)$. If $p\ne 2$, an almost eigenvector for $\mu$ is
	\[v_0=(\ii t,t,\ii,1).\]
	Indeed, the independence number of $(\mu \mathrm{I}_3-A)_{\{1,2,3\},\{1,2,3\}}$ is $1$ because there is only one product in that determinant with absolute value $1$ and the rest have smaller absolute value. Now
	\[h(v_0)=\max\left\{\frac{|\ii t\mu|_p}{|k|_p},\frac{|t\mu|_p}{|k|_p},0\right\}\le\frac{1}{p}\]
	and
	\[\min\left\{1,\frac{|v_0\tr\Omega\bar{v}_0|_p}{p\|v_0\|_p^2}\right\}=\frac{1}{p}.\]
	In Theorem \ref{thm:integrable}, we have $\mu=\ii a$, for some $a\in\Qp$, and $k=v_0\tr\Omega\bar{v}_0$ has the same order and leading digit than $2\ii$, which means that
	\[-4c(r^2+cs^2)^2+4a^4=0\]
	has a solution with $c=1$, $r=a$ and $s=0$. This implies that the normal form of $(J,H)$ at $(0,0,1,0,0,1)$ will have a component of the form $x^2+\xi^2$. (Theorem \ref{thm:integrable} does not allow us to find the other component. This may be achieved with a more specialized method, but approximating the eigenvectors is not necessary here anyway because \cite[Lemma 2.6]{CrePel-angular} already gives their exact value.)
	
	The other classical integrable system which we have studied in the $p$-adic case, the Jaynes-Cummings system \cite{CrePel-JC}, does not require approximating the eigenvectors at the rank $0$ points because they have a simpler form than in the coupled angular momentum system.
	
	Theorem \ref{thm:integrable1} can be used to classify the rank $1$ critical points in both the $p$-adic Jaynes-Cummings system and the $p$-adic coupled angular momentum system. In this case it is not possible to find a general rule for all the critical points, because they depend on one parameter (for the Jaynes-Cummings system) or three parameters (for the coupled angular momentum); the result will be different depending on the parameters and the prime $p$. Actually, by \cite[Proposition 8.4]{CrePel-angular}, we can obtain all the possible rank $1$ normal forms just varying the parameters of the coupled angular momentum.

\section{Real valued analog of Theorem \ref{thm:eigen}}\label{sec:real}

In the real case there is an analog of Theorem \ref{thm:eigen} with a similar strategy for the proof; the proof as such is simpler than in the $p$-adic case. But, unlike in the $p$-adic case, there are no reasonable analogs of Theorems \ref{thm:integrable1} and \ref{thm:integrable} because the eigenvectors of the matrix $\Omega^{-1}\dd^2 g$ have no relevance for the Williamson type of the critical point of an integrable system.

\begin{figure}
	\includegraphics{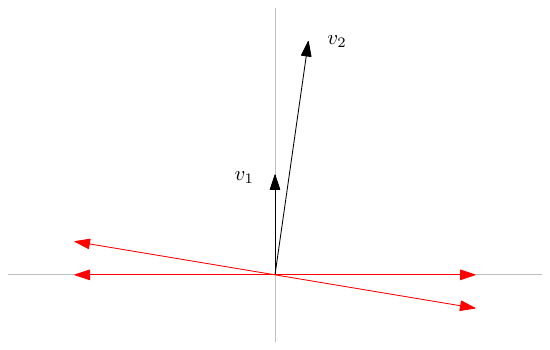}
	\caption{A real analog of Figure \ref{fig:vectors-padic}, where the vectors $v_1$ and $v_2$ are linearly independent but the red vectors are approximately orthogonal to both despite being quite long. In the real case, this is reflected on the norm of the matrix which has $v_1$ and $v_2$ as rows.}
\end{figure}

\begin{proposition}\label{prop:real-main}
	Let $m,n,r$ be positive integers with $r\in\{1,\ldots,m\}$. Let $\|\cdot\|$ denote a vector norm and its associated matrix norm. Let $A\in\M_{m\times n}(\C)$ with rank $r$. Let $v_0\in\C^n$. Let $I\subset\{1,\ldots,m\}$ and $J\subset\{1,\ldots,n\}$ of size $r$. Suppose that $A_{IJ}$ is invertible. Then
	\[\Big\{x\in\C^n:Ax=0\Big\}\cap\mathrm{B}(v_0,\|A_{IJ}^{-1}\|\cdot\|A_{I\cdot}v_0\|)\ne\varnothing,\]
	where the ball is calculated using the norm $\|\cdot\|$.
\end{proposition}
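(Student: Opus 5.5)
Mirroring the proof of Lemma \ref{lemma:main}, I will build an explicit solution by solving only the equations indexed by $I$, with the variables indexed by $J$ as unknowns and the remaining variables frozen at their values in $v_0$. Reorder rows and columns so that $I=\{1,\ldots,r\}$ and $J=\{1,\ldots,r\}$. Write
\[A_{I\cdot}=\begin{pmatrix} A_{IJ} & A_{IJ'}\end{pmatrix},\qquad v_0=\begin{pmatrix} v_1\\ v_2\end{pmatrix},\]
where $J'$ is the complement of $J$, $v_1\in\C^r$ and $v_2\in\C^{n-r}$. Set $v_3=-A_{IJ}^{-1}A_{IJ'}v_2$ and $v=(v_3,v_2)$. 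Then $A_{I\cdot}v=A_{IJ}v_3+A_{IJ'}v_2=0$.

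Next I show that all of $Av=0$ holds. Since $A_{IJ}$ is invertible, the rows $A_{i\cdot}$ with $i\in I$ are linearly independent. There are $r=\operatorname{rank}A$ of them, so they span the row space of $A$. Every other row is therefore a linear combination of them and also annihilates $v$, which gives $Av=0$.

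For the distance estimate, note that $v-v_0$ is supported in the $J$ coordinates, and
\[v_3-v_1=-A_{IJ}^{-1}\bigl(A_{IJ'}v_2+A_{IJ}v_1\bigr)=-A_{IJ}^{-1}A_{I\cdot}v_0.\]
Hence $\|v-v_0\|=\|(v_3-v_1,0)\|\le\|A_{IJ}^{-1}\|\,\|A_{I\cdot}v_0\|$ by the definition of the associated matrix norm. This means $v$ lies in the closed ball $\mathrm{B}(v_0,\|A_{IJ}^{-1}\|\,\|A_{I\cdot}v_0\|)$.

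The only delicate point is the bookkeeping of norms. The vector norm is applied to vectors of sizes $n$, $r$ and $m$ or $r$, so it must be read as a compatible family of norms on each $\C^k$. Padding with zeros must preserve the norm, so that $\|(w,0)\|=\|w\|$; this holds for all $\ell^q$ norms, which I take to be the intended setting. I expect this interpretation, rather than any computation, to be the main obstacle. One further convention must be fixed: the ball should be closed, or, if $A_{I\cdot}v_0=0$, the radius-zero case should be read as $v=v_0$, which is already a solution.
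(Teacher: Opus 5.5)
Your proof is correct and follows the paper's argument: reduce to $I=J=\{1,\ldots,r\}$, set $v=(-A_{IJ}^{-1}A_{IJ'}v_2,\,v_2)$, use $\operatorname{rank}A=r$ to get $Av=0$, and bound $\|v-v_0\|$ by $\|A_{IJ}^{-1}\|\,\|A_{I\cdot}v_0\|$. Two points the paper leaves implicit are made precise in your version: that the $I$-rows are linearly independent (so they span the row space), and that the norms must be compatible under zero-padding, and under the coordinate reordering used in the ``without loss of generality'' step.
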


\begin{proof}
	Suppose without loss of generality that $I=J=\{1,\ldots,r\}$. Then there exist matrices
	\[A_{11}\in\M_r(\C),\quad A_{12}\in\M_{r\times(n-r)}(\C),\quad A_{21}\in\M_{(n-r)\times r}(\C),\quad A_{22}\in\M_{n-r}(\C)\] and vectors $v_1\in\C^r$ and $v_2\in\C^{n-r}$ such that $A_{11}$ is invertible and
	\[A=\begin{pmatrix}
		A_{11} & A_{12} \\
		A_{21} & A_{22}
	\end{pmatrix}\text{ and }
	v_0=\begin{pmatrix}
		v_1 \\ v_2
	\end{pmatrix}.\]
	Let
	\[v_3=-A_{11}^{-1}A_{12}v_2\in\C^r\]
	and $v=(v_3,v_2)\in\C^n$. Then we have
	\begin{equation}\label{eq:A}
		\begin{pmatrix}
			A_{11} & A_{12}
		\end{pmatrix}v=A_{11}v_3+A_{12}v_2=0.
	\end{equation}
	Since the rank of $A$ is $r$, its first $r$ rows generate all rows, hence expression \eqref{eq:A} implies $Av=0$. We now have that
	\begin{align*}
		\|v-v_0\| & =\|(-A_{11}^{-1}A_{12}v_2-v_1,0)\| \\
		& =\|A_{11}^{-1}A_{12}v_2+v_1\| \\
		& =\|A_{11}^{-1}(A_{12}v_2+A_{11}v_1)\| \\
		& \le\|A_{11}^{-1}\|\cdot\|A_{12}v_2+A_{11}v_1\| \\
		& =\|A_{IJ}^{-1}\|\cdot\|A_{I\cdot}v_0\|.\qedhere
	\end{align*}
\end{proof}

\begin{example}
	We consider now the matrix
	\[B=\begin{pmatrix}
		\frac{1}{2} & 0 & 0 & 0 \\
		1 & \frac{1}{2} & 0 & 0 \\
		0 & 1 & \frac{1}{2} & 0 \\
		0 & 0 & 1 & \frac{1}{2}
	\end{pmatrix}\]
	and the vector $v_0=(2,-6,14,-30)$. Taking the supremum norm, we have that
	\[\|Av_0\|=\|(1,-1,1,-1)\|=1.\]
	The matrix norm associated to the supremum norm is the maximum of the sums of the absolute values of the elements in each row. In this case,
	\[B^{-1}=\begin{pmatrix}
		2 & 0 & 0 & 0 \\
		-4 & 2 & 0 & 0 \\
		8 & -4 & 2 & 0 \\
		-16 & 8 & -4 & 2
	\end{pmatrix}\]
	has norm $30$ (the sum of the absolute values of the last row), so Proposition \ref{prop:real-main} implies that there is a solution to $Bv=0$ at supremum distance $30$ of $v_0$, which effectively happens because $0$ is at that distance.
\end{example}

\begin{proposition}\label{prop:real-eigen}
	\letnpos. Let $\|\cdot\|$ denote a vector norm and its associated matrix norm. Let $A\in\M_n(\C)$. Let $v_0=(v_{01},\ldots,v_{0n})\in\C^n$. Let $\lambda$ be an eigenvalue of $A$. Let $r$ be the rank of $\lambda \mathrm{I}_n-A$. Let $I\subset\{1,\ldots,m\}$ and $J\subset\{1,\ldots,n\}$ of size $r$. Suppose that $(\lambda \mathrm{I}_n-A)_{IJ}$ is invertible. Then there exists
	\[v\in\mathrm{B}(v_0,\|(\lambda \mathrm{I}_n-A)_{IJ}^{-1}\|\cdot\|\lambda v_{0I}-A_{I\cdot}v_0\|)\]
	such that $v$ is an eigenvector corresponding to $\lambda$.
\end{proposition}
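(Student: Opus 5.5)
The statement follows directly from Proposition \ref{prop:real-main}, applied to the matrix $\lambda\mathrm{I}_n-A$, in the same way that Theorem \ref{thm:eigen} follows from Lemma \ref{lemma:main}. The plan has three steps.

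\emph{Set up the matrix.} Put $C=\lambda\mathrm{I}_n-A\in\M_n(\C)$. By hypothesis $C$ has rank $r$, and $C_{IJ}$ is an invertible $r$-by-$r$ submatrix. Since $\lambda$ is an eigenvalue of $A$, $C$ is singular, so $r<n$. This gives exactly the setting of Proposition \ref{prop:real-main} with $m=n$.

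\emph{Apply Proposition \ref{prop:real-main}.} It yields a vector $v$ with $Cv=0$ and
\[\|v-v_0\|\le\|C_{IJ}^{-1}\|\cdot\|C_{I\cdot}v_0\|.\]
The $i$-th row of $C$ is $\lambda\mathrm{e}_i-A_{i\cdot}$, so for $i\in I$ the $i$-th entry of $C_{I\cdot}v_0$ is $\lambda v_{0i}-A_{i\cdot}v_0$. Hence $C_{I\cdot}v_0=\lambda v_{0I}-A_{I\cdot}v_0$, which is the radius in the statement. The condition $Cv=0$ means precisely $Av=\lambda v$.

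\emph{Check that $v$ is nonzero.} This is the only real obstacle, since an eigenvector must be nonzero, and it is where I expect the work to lie. The plan is to use the explicit construction in the proof of Proposition \ref{prop:real-main}. There $v$ agrees with $v_0$ on the coordinates outside $J$, and those coordinates can be prescribed freely because $r<n$. If $v_0$ is nonzero outside $J$, then $v\neq 0$ and we are done. Otherwise replace $v_0$ by $v_0+\varepsilon w$, where $w$ is supported outside $J$ and $\varepsilon>0$ is small. This changes the radius continuously, so we get eigenvectors $v_\varepsilon\neq 0$ at distance at most the bound plus a quantity tending to $0$.

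If the ball is read as closed and the bound is strictly positive, this limiting argument gives the result. If instead $\lambda v_{0I}-A_{I\cdot}v_0=0$, the ball degenerates to the single point $v_0$. In that case $v_0$ itself lies in the kernel, which gives the result whenever $v_0\neq0$. So I would state the mild implicit assumption $v_0\neq0$, or interpret ``eigenvector'' as allowing the zero solution, just as the paper does in its example with $0$ at distance $30$.
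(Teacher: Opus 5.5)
Your first two steps are exactly the paper's proof. The paper only says that the statement follows from Proposition~\ref{prop:real-main} applied to $\lambda\mathrm{I}_n-A$. It never checks that the resulting $v$ is nonzero, so it implicitly reads ``eigenvector'' as ``element of $\ker(\lambda\mathrm{I}_n-A)$''. Under that reading, your steps one and two already form a complete proof.

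Your third step, however, contains a false claim: that the limiting argument gives a nonzero eigenvector when the ball is closed and the radius is positive. When $v_0$ vanishes outside $J$, each vector your perturbation produces is $\varepsilon$ times one fixed kernel vector. As $\varepsilon\to0$ these converge to $0$, so the limiting argument only recovers the zero vector. This cannot be repaired, because the nonzero version of the statement is false even with $v_0\neq0$, a closed ball and a positive radius.

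\emph{Euclidean norm.} Take $A=\mathrm{diag}(-1,0)$, $\lambda=0$, $I=J=\{1\}$ and $v_0=\mathrm{e}_1$. The radius is $1$, but every eigenvector $t\mathrm{e}_2$ with $t\neq0$ lies at distance $\sqrt{1+|t|^2}>1$.

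\emph{Sup norm.} Take
\[A=\begin{pmatrix}-1&0&-1\\0&-1&1\\0&0&0\end{pmatrix},\qquad\lambda=0,\qquad I=J=\{1,2\},\qquad v_0=(1,1,0).\]
Then $(\lambda\mathrm{I}_3-A)_{IJ}=\mathrm{I}_2$ and the radius is $\|(1,1)\|_\infty=1$. The eigenvectors are $t(-1,1,1)$. The condition $\|t(-1,1,1)-v_0\|_\infty\le1$ forces $|t+1|\le1$ and $|t-1|\le1$, and adding the squared inequalities gives $2|t|^2\le0$, so $t=0$.

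So your fallback ``assume $v_0\neq0$'' does not save the statement. Only your other option, allowing $v=0$, does, and that is the paper's implicit convention. Your observation that the constructed $v$ is nonzero whenever $v_0$ is not supported in $J$ is correct.
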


Proposition \ref{prop:real-eigen} is a consequence of Proposition \ref{prop:real-main} applied to the matrix $\lambda\mathrm{I}_n-A$.

\begin{example}
	For the same matrix $B$ as before, $\lambda=1/2$ and $v_0=(a,b,c,d)$, we can take $I=\{2,3,4\}$ and $J=\{1,2,3\}$. Still with the supremum norm, we now have that $\|\lambda v_{0I}-B_{I\cdot}v_0\|=\|(0,-a,-b,-c)\|=\max\{|a|,|b|,|c|\}$. In this case
	\[(\lambda \mathrm{I}_n-B)_{IJ}^{-1}=\begin{pmatrix}
		-1 & 0 & 0 \\
		0 & -1 & 0 \\
		0 & 0 & -1
	\end{pmatrix}\]
	whose norm is $1$, and the eigenvector must be at a distance $\max\{|a|,|b|,|c|\}$ from $v_0$, which actually happens because the eigenvectors are of the form $(0,0,0,d)$.
\end{example}

\section{Final remarks}\label{sec:remarks}

\subsection*{Critical points of $p$-adic integrable systems}

In our recent papers \cite{CrePel-JC,CrePel-nonsqueezing,CrePel-williamson,CrePel-williamson2,CrePel-Darboux,CrePel-angular} we started a systematic study of $p$-adic integrable systems and their singularities. The challenge is that it is not always easy to find a normal form of their critical points. In order to find the normal form of a critical point of a $p$-adic system, it is necessary to find the eigenvectors of a certain matrix and not only the eigenvalues (as it happens in the real case). The results we give in this paper allow us to use an approximation of the eigenvectors instead, to find these normal forms, which can be very useful when trying to understand examples from physics where only limited information is available to us.

\subsection*{Usefulness of Theorem \ref{thm:integrable}}

Although Theorem \ref{thm:integrable} appears to be quite technical, it is very useful to tackle explicit examples, both theoretical as well as from concrete physical models (as we show in Sections \ref{sec:examples} and \ref{sec:angmom}) for which only limited information is available to us.

\subsection*{The eigenvalue is not exact}

Although Theorems \ref{thm:integrable1} and \ref{thm:integrable} seem to use the exact value of $\lambda$, we do not need that exact value to use them; we only need to bound some $p$-adic absolute values and norms involving $\lambda$.

\subsection*{Choice of eigenvalues}

If $\lambda\in\Qp$, the normal form of the critical point in Theorem \ref{thm:integrable} is completely determined and there is no need to use eigenvectors. The cases where $\lambda^2\in\Qp$ (with or without $\lambda\in\Qp$) include, among others, the elliptic and hyperbolic components, which appear, for example, in the aforementioned $p$-adic Jaynes-Cummings and coupled angular momentum system. Proposition \ref{prop:integrable} is a stronger but more technical result which implies Theorem \ref{thm:integrable} and applies for any possible value of $\lambda$, but only in dimension $4$.

\subsection*{Application of Theorem \ref{thm:integrable} to the normal forms themselves}

Eigenvectors of a matrix $A$ for an eigenvalue $\lambda$ are always $(A,\lambda,\Omega)$-almost eigenvectors, because in this case we have $h(v_0)=0$ and the inequality always holds. If we apply Theorem \ref{thm:integrable} to a normal form $(x^2+c\xi^2,y^2+c'\eta^2)$, its eigenvalues are $\sqrt{-c}$, $-\sqrt{-c}$, $\sqrt{-c'}$ and $-\sqrt{-c'}$, and the corresponding eigenvectors have the form
\[(\sqrt{-c},1,0,0),(-\sqrt{-c},1,0,0),(0,0,\sqrt{-c'},1),\text{ and }(0,0,-\sqrt{-c'},1).\]
The equation gives
\[p_{c,\lambda,k}=c(2\sqrt{-c})^2(r^2+cs^2)^2+4(\sqrt{-c})^4=-4c^2(r^2+cs^2)^2+4c^2=0\]
which has $r=1$ and $s=0$ as a solution, and the same for $c'$ instead of $c$.

\subsection*{Particular case of Theorem \ref{thm:integrable1}}

In the particular case where $\beta=0$, the condition simplifies to $\gamma=c\alpha(r^2+cs^2)^2$. The normal form itself has $\alpha=1$ and $\gamma=c$, which satisfy it for $r=1$ and $s=0$. For another example, if $\alpha=1$ and $\gamma=p^n$ for some $n\in\Z$, we need $p^n=c(r^2+cs^2)^2$. The particular $c\in X_p$ which satisfies this depends on $p$ and $n$:
\begin{itemize}
	\item if $n=4k$, then $c=1$, $r=p^k$, $s=0$;
	\item if $n=4k+1$, then $c=p$, $r=p^k$, $s=0$;
	\item if $n=4k+2$:
	\begin{itemize}
		\item if $p\equiv 1\mod 4$ or $p=2$, then $p=r_1^2+s_1^2$ for some $r_1,s_1\in\Z$, and $c=1$, $r=p^kr_1$, $s=p^ks_1$;
		\item if $p\equiv 3\mod 4$, then $c=p^2$, $r=p^k$, $s=0$;
	\end{itemize}
	\item if $n=4k+3$, then $c=p$, $r=0$, $s=p^k$.
\end{itemize}

\subsection*{Other works}

For an introduction to the $p$-adic numbers and their role in geometry, we recommend \cite{Gouvea,Schneider}. $p$-adic geometry has been extensively used in physics, see for example \cite{BreFre,DKKV,DKKVZ}. See also \cite{Gosson,Dragovich-quantum,RTVW,VlaVol} for an application to quantum mechanics, \cite{BFOW,FreOls,FreWit,FGZ,GarLop,Volovich} for applications to string theory and \cite{CLH} for an application to the theory of black holes. We recommend the books by Marsden-Ratiu \cite{MarRat} and Ortega-Ratiu \cite{OrtRat} for an introducion to the ideas of classical mechanics and symplectic geometry, with an angle which is very appropriate for the study of integrable systems; see also the article by Ratiu-Wacheux-Zung \cite{RWZ} for a recent treatment of how the critical points of a real integrable system affect the singular affine structure they determine (we believe that such ideas should also play a role in the $p$-adic situation, when trying to understand affine structures). The $p$-adic numbers also appear in other areas of physics/mathematics of a high current interest. For example, there are connections between tensor networks \cite{CPSV} and $p$-adic fields, see the paper \cite{HMSS}. We hope that ideas from $p$-adic symplectic geometry could be useful in some of these areas.

\appendix
\section{$p$-adic numbers and $p$-adic balls}\label{sec:padic}

The field of the \emph{$p$-adic numbers} $\Qp$ is defined as the metric completion of $\Q$ with respect to the \emph{$p$-adic absolute value}. This is an absolute value defined on $\Q$ as follows:
\[|n|_p=\frac{1}{\max\{p^e:e\in\N,n\equiv 0\mod p^e\}}\]
and
\[\left|\frac{m}{n}\right|_p=\frac{|m|_p}{|n|_p}\]
for $m,n\in\Z$ with $n\ne 0$.

Let $\overline{\Qp}$ be the algebraic closure of $\Qp$. Then the $p$-adic absolute value extends uniquely to $\overline{\Qp}$.

The field $\Cp$ is defined as the metric completion of $\overline{\Qp}$ with respect to the $p$-adic absolute value.

The \emph{$p$-adic norm} of a vector $v=(v_1,\ldots,v_n)\in(\Cp)^n$ is defined as
\[\|v\|_p=\max_{1\le i\le n}|v_i|_p.\]

A \emph{$p$-adic ball} is defined using this norm:
\[\ballcp(v_0,r)=\{v\in(\Cp)^n:\|v-v_0\|_p\le r\}\]
for $v_0\in(\Cp)^n$ and $r\in\R$ with $r>0$.

\begin{figure}
	\includegraphics[scale=0.7]{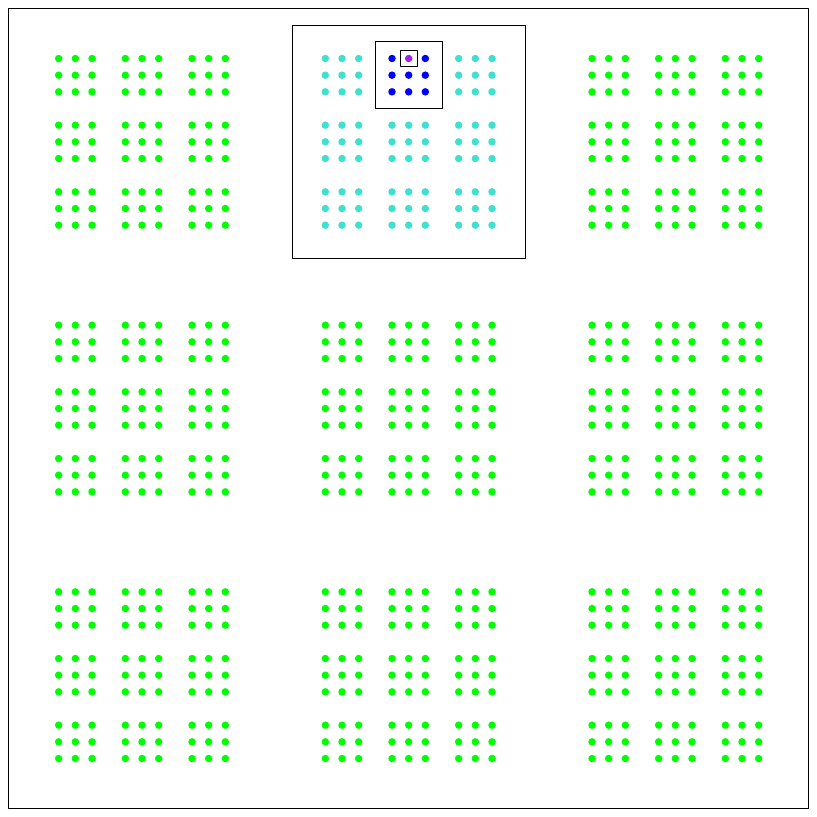}
	\caption{$3$-adic balls in $\Qp$ in dimension $2$. The purple point represents a ball of radius $1$, the dark blue points form a ball of radius $3$, the light blue points form a ball of radius $9$ and the green points form a ball of radius $27$. In this paper we are concerned with balls in $\Cp$, which contain the corresponding balls in $\Qp$, but no points in $(\Qp)^2$ outside the ball.}
	\label{fig:balls}
\end{figure}

See Figure \ref{fig:balls} for a representation of $3$-adic balls in dimension $2$.

\section{Equivalence between notions of non-degeneracy for critical points}\label{sec:degeneracy}

Definition \ref{def:nondeg} is not the usual notion of non-degenerate critical point, but both notions are related, as the following result shows.

\begin{proposition}[{\cite[Proposition 4.2]{CrePel-williamson}}]\label{prop:nondeg}
	Let $n$ be a positive integer. \letpprime. Let $M$ be a $p$-adic analytic $2n$-dimensional manifold, $f:M\to \Qp$ a $p$-adic analytic function, and $m$ a critical point of $f$. Then the following are equivalent:
	\begin{enumerate}
		\item $m$ is a non-degenerate critical point of $f$ in the usual sense;
		\item There exists a linear symplectic form $\omega$ such that $m$ is a non-degenerate critical point of $f:(M,\omega)\to \Qp$ in the symplectic sense (Definition \ref{def:nondeg}).
		\item There exist infinitely many linear symplectic forms $\omega$ such that $m$ is a non-degenerate critical point of $f:(M,\omega)\to \Qp$ in the symplectic sense.
	\end{enumerate}
\end{proposition}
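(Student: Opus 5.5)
The plan is to prove the cycle of implications (3)$\Rightarrow$(2)$\Rightarrow$(1)$\Rightarrow$(3). The implication (3)$\Rightarrow$(2) is trivial. Throughout, work in a chart at $m$ and identify linear symplectic forms with invertible antisymmetric matrices $\Omega\in\M_{2n}(\Qp)$. Let $H=\dd^2f(m)$ be the symmetric Hessian, so that $m$ is non-degenerate in the usual sense exactly when $\det H\ne 0$.

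For (2)$\Rightarrow$(1), suppose the eigenvalues of $\Omega^{-1}H$ are all distinct. The key observation is that $\Omega^{-1}H$ is Hamiltonian: $(\Omega^{-1}H)\tr\Omega+\Omega\,\Omega^{-1}H=-H\Omega^{-1}\Omega+H=0$, using $\Omega\tr=-\Omega$. Hence $\Omega^{-1}H$ is similar to $-(\Omega^{-1}H)\tr$, and its characteristic polynomial $\chi$ satisfies $\chi(-x)=\chi(x)$. The eigenvalues therefore come in pairs $\pm\mu$. If $0$ were an eigenvalue, then because the roots are symmetric and the degree $2n$ is even, $0$ would have even multiplicity, so it would be repeated. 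This contradicts distinctness. Therefore $\det(\Omega^{-1}H)\ne0$, which gives $\det H\ne 0$.

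For (1)$\Rightarrow$(3), assume $\det H\ne0$. By a congruence $H=P\tr DP$ we may diagonalize $H$ over $\Qp$, so $D=\mathrm{diag}(d_1,\ldots,d_{2n})$ with all $d_i\ne0$. If $\Omega'$ is a symplectic matrix adapted to $D$, then $\Omega=P\tr\Omega' P$ gives $\Omega^{-1}H=P^{-1}\Omega'^{-1}DP$, which is similar to $\Omega'^{-1}D$. It is therefore enough to handle diagonal $H$. Pair the coordinates $(1,2),(3,4),\ldots$ and take $\Omega'$ block diagonal with blocks $\begin{pmatrix}0&t_j\\-t_j&0\end{pmatrix}$ for $t_j\in\Qp^*$. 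Each block of $\Omega'^{-1}D$ then has eigenvalues $\pm\sqrt{-d_{2j-1}d_{2j}}/t_j$, which are nonzero and distinct within the block. Choosing the $t_j$ generically, for instance with pairwise distinct valuations or simply avoiding the finitely many values that produce coincidences $\mu_j^2=\mu_k^2$, makes all $2n$ eigenvalues distinct. Since there are infinitely many admissible choices of $(t_j)$, and they give infinitely many distinct forms $\Omega$, this proves (3).

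The main point needing care is (2)$\Rightarrow$(1), namely the parity argument for the eigenvalue $0$. I would state that argument explicitly using $\chi(x)=\det(x\mathrm{I}-\Omega^{-1}H)=\det(x\mathrm{I}+\Omega^{-1}H)$, which holds because the matrix is Hamiltonian. This makes $\chi$ an even polynomial, so $\chi(x)=g(x^2)$, and $0$ is a root of multiplicity at least $2$ whenever $\chi(0)=0$. Everything else in the proof is linear algebra over $\Qp$.
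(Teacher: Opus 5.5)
Your proof is correct and complete. Note that the paper itself gives no proof of this proposition: it is quoted from \cite[Proposition 4.2]{CrePel-williamson}, so there is nothing in the paper to compare against. Your argument is a self-contained linear-algebra proof over $\Qp$. For (2)$\Rightarrow$(1), the Hamiltonian identity $A\tr\Omega+\Omega A=0$ for $A=\Omega^{-1}H$ makes $\chi_A$ an even polynomial. Hence $\det H=0$ would force $0$ to be a root of multiplicity at least $2$, contradicting distinctness. For (1)$\Rightarrow$(3), a congruence diagonalization $H=P\tr DP$ (valid because $\mathrm{char}\,\Qp=0$) together with $\Omega=P\tr\Omega'P$ reduces the problem to a block-diagonal $\Omega'$ with free scalars $t_j$. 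Distinct tuples $(t_j)$ give distinct $\Omega$ because $P$ is fixed and invertible.

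One small slip: choosing the $t_j$ with pairwise distinct valuations does not by itself separate the values $\mu_j^2$, since $v(\mu_j^2)=v(d_{2j-1}d_{2j})-2v(t_j)$. For instance, $d_1d_2=1$, $d_3d_4=p^2$, $t_1=1$, $t_2=p$ gives $\mu_1^2=\mu_2^2=-1$. What you would need is distinct valuations of $d_{2j-1}d_{2j}/t_j^2$. Your other suggestion is the cleanest fix: choose $t_1,t_2,\ldots$ successively, with each $t_j$ avoiding the at most $2(j-1)$ values for which $t_j^2$ produces a coincidence $\mu_j^2=\mu_k^2$ with some $k<j$. This leaves infinitely many admissible tuples, so (3) follows.
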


\end{document}